\newcolumntype{L}{>{\centering\arraybackslash}m{3cm}}
\theoremstyle{definition}
\newtheorem{theorem}{Theorem}[section]
\newtheorem{lemma}{Lemma}[section]
\newtheorem{corollary}{Corollary}[section]
\newtheorem{proposition}{Proposition}[section]
\begin{document}
 \begin{center}
		{\large On the $\mathcal{ABS}$ spectrum and energy of  graphs  }\\
		\vspace*{0.3cm} 
	\end{center}
	\begin{center}
		Swathi Shetty$^{1}$, B. R. Rakshith$^{*,2}$, Sayinath Udupa N. V. $^{3}$\\
		Department of Mathematics, Manipal Institute of Technology\\ Manipal Academy of Higher Education\\ Manipal, India -- 576104\\
		swathi.dscmpl2022@learner.manipal.edu$^{1}$\\
		ranmsc08@yahoo.co.in; rakshith.br@manipal.edu$^{*,2}$\\
		sayinath.udupa@manipal.edu	$^{3}$.
	\end{center}
 \begin{abstract}
Let $\eta_{1}\ge \eta_{2}\ge\cdots\ge \eta_{n}$ be the eigenavalues of $\mathcal{ABS}$ matrix. In this paper, we characterize connected graphs with $\mathcal{ABS}$ eigenvalue $\eta_{n}>-1$. As a result, we determine all connected graphs with exactly two distinct $\mathcal{ABS}$ eigenvalues. We show that a connected bipartite graph has three distinct $\mathcal{ABS}$ eigenvalues if and only if it is a complete bipartite graph. Furthermore, we present some bounds for the $\mathcal{ABS}$ spectral radius (resp. $\mathcal{ABS}$ energy) and characterize extremal graphs. Also, we obtain a relation between $\mathcal{ABC}$ energy and $\mathcal{ABS}$ energy. Finally, the chemical importance of $\mathcal{ABS}$ energy is investigated and it shown that the $\mathcal{ABS}$ energy is useful in predicting certain properties of molecules. 
  \bigskip
  
  \noindent
  {\bf AMS Classification:}  05C50, 05C09, 05C35, 05C92.
  .\\
  
  \noindent
  {\bf Key Words:} $\mathcal{ABS}$ matrix, spectral radius, $\mathcal{ABS}$ energy, QSPR analysis.
\end{abstract}

  \section{Introduction}\label{sec1}
  Throughout this article, we assume that $G$ is a graph with vertex set $V (G)$ and edge set $E(G)$, where $V(G)=\{v_{1},v_{2},\ldots,v_{n}\}$ and $|E(G)|=m$.   If two vertices $v_i$ and $v_j$ are adjacent, then
we write it as $v_i\sim v_j$ otherwise, $v_i\not\sim v_j$. We denote the degree of the vertex $v_{i}$ by $d(v_{i})/d_{i}$.\\ Adjacency matrix of $G$ is one of the well-studied graph matrix, denoted by $A(G)$ and defined as $A(G)=[a_{ij}]_{n\times n}$, where $a_{ij}=1$ if and only if $v_{i}\sim v_{j}$ or 0, otherwise.   
 If $\lambda_{1}\ge\lambda_{2}\ge\cdots\ge\lambda_{n}$ are the eigenvalues of $A(G)$, then the sum  $\displaystyle\sum\limits_{i=1}^n |\lambda_i|$ is called the energy of graph $G$ and is denoted by $\mathcal{E}(G)$. The concept of graph energy, introduced by Gutman in 1978, slowly attracted mathematicians and chemists. In recent years, extensive research on graph energy has been carried out. For recent research on graph energy, see \cite{estrada2017meaning, alawiah2018new, oboudi2019new, espinalgraph, akbariarelations, tavakoli2024energy} and refer to the book "Graph Energy" by Li, Shi, and Gutman \cite{li2012graph}. The study of graph energy is extended to various graph matrices, including (signless) Laplacian matrix, distance matrix, degree-based graph matrices and distance-based graph matrices. More than 50 graph energies have been defined so far. See \cite{gutman2020research} for more details.\\[2mm]  
A topological index is a numerical quantity derived from the graph's structure. In literature, plenty of topological indices are defined and used as molecular descriptors (see \cite{gutman2013degree,das2023neighborhood,gutman2021geometric,ali2020symmetric} ).  Most of the degree-based topological indices
 can be represented as $TI(G)=\sum_{v_{i}\sim v_{j}}\mathcal{F}(d_{i},d_{j})$, where $\mathcal{F}(d_{i},d_{j})= \mathcal{F}(d_{j},d_{i})$. As examples, we have first Zagreb index $\mathcal{F}(d_{i},d_{j})=d_{i}+d_{j}$, second Zagreb index $\mathcal{F}(d_{i},d_{j})=d_{i}d_{j}$, Randi\'c index ($R(G)$) $\mathcal{F}(d_{i},d_{j})=\dfrac{1}{\sqrt{d_{i}d_{j}}}$, harmonic index ($H(G)$) $\mathcal{F}(d_{i},d_{j})=\dfrac{2}{d_{i}+d_{j}}$, sum-connectivity index ($\chi(G)$) $\mathcal{F}(d_{i},d_{j})=\dfrac{1}{\sqrt{d_{i}+d_{j}}}$, atom-bound connectivity index $(\mathcal{ABC}(G))$ $\mathcal{F}(d_{i},d_{j})=\sqrt{\dfrac{d_{i}+d_{j}-2}{d_{i}d_{j}}}$, atom-bound sum-connectivity index $(\mathcal{ABS}(G))$ $\mathcal{F}(d_{i},d_{j})=\sqrt{\dfrac{d_{i}+d_{j}-2}{d_{i}+d_{j}}}$, etc.
 For a topological index $TI(G)$, Das et al. \cite{das2018degree} defined a general extended adjacency matrix as $\mathcal{T}=(t_{ij})_{n\times n}$, where $t_{ij}=\mathcal{F}(d_{i},d_{j})$ if $v_{i}\sim v_{j}$ or $0$, otherwise. The sum of  absolute values of all the eigenvalues of the matrix $\mathcal{T}$ is called the energy of the general extended adjacency matrix $\mathcal{T}$. In \cite{das2018degree}, Das et al. obtained several lower and upper bounds for the energy of the matrix $\mathcal{T}$, and deduced several known results about degree-based energies of graphs.\\[2mm]
 The $\mathcal{ABS}$ index  was introduced recently by Ali et al. in \cite{ali2022atom}. It combines both sum-connectivity index  and atom-bound sum connectivity index. Bounds on $\mathcal{ABS}$ index for the classes of (molecular) trees and general graphs are obtained in \cite{ali2022atom} and also extremal graphs are classified.
 Chemical applicability of $\mathcal{ABS}$-index is demonstrated in \cite{ali2023atom,nithya2023smallest}. For more details about $\mathcal{ABS}$ index  we refer to the survey article \cite{ali2024extremal} by Ali et al. The $\mathcal{ABS}$ matrix of $G$ is defined to be the matrix $\mathcal{AS}(G)=(w_{ij})_{n\times n}$, where $w_{ij}=\sqrt{\dfrac{d_{i}+d_{j}-2}{d_{i}+d_{j}}}$ if $v_{i}\sim v_{j}$ and 0, otherwise. We denote the eigenvalues of $\mathcal{AS}(G)$ by $\eta_{1}\ge\eta_{2}\ge\cdots\ge\eta_{n}$. The sum $\sum_{i=1}^{n}|\eta_{i}|$ is called the $\mathcal{ABS}$ energy of $G$ and is denoted by $\mathcal{E}_{ABS}(G)$. The study of properties of $\mathcal{ABS}$ matrix began recently. In \cite{lin2024abs}, it is proved that the $\mathcal{ABS}$ Estrada index ($\sum_{i=1}^{n}e^{\eta_{i}}$) of trees is maximum from the star graph and it is minimum for the path graph. Also, in \cite{lin2024abstrees}, the authors proved that $\mathcal{ABS}$ spectral radius of a tree is maximum for star graph and it is minimum for the path graph. The chemical importance of the $\mathcal{ABS}$ Estrada
 index and the $\mathcal{ABS}$ spectral radius  are investigated separately in \cite{lin2024abs,lin2024abstrees}, and it is shown that the $\mathcal{ABS}$ Estrada index and  $\mathcal{ABS}$ spectral radius can be useful in  predicting  certain properties of molecules.\\[2mm]
 Motivated by this, in Section 2 of the paper, we characterize connected graphs with $\mathcal{ABS}$ eigenvalue $\eta_{n}>-1$. As a result, we determine all connected graphs with exactly two distinct $\mathcal{ABS}$ eigenvalues. Further, we show that a connected bipartite graph has three distinct $\mathcal{ABS}$ eigenvalues if and only if it is a complete bipartite graph. In Sections 3 and 4, we present some bounds for the $\mathcal{ABS}$ spectral radius (resp. $\mathcal{ABS}$ energy) and characterize extremal graphs. Also, we obtain a relation between $\mathcal{ABC}$ energy and $\mathcal{ABS}$ energy. In Section 5, the chemical importance of $\mathcal{ABS}$ energy is investigated and it shown that the $\mathcal{ABS}$ energy is useful to predict the boiling point and pi-electron energy of benzenoid hydrocarbons.\\[2mm]
 As usual, the complete graph, path graph and  complete bipartite graph  on  $n$ vertices are denoted by  $K_{n}$, $P_{n}$ and $K_{n_{1},n_{2}}$  ($n_{1}+n_{2}=n$), respectively.
\section{Properties of ABS eigenvalues}\label{sec3}
The following proposition is one of the basic properties of $\mathcal{ABS}$ eigenvalues. We omit its proof as it is straightforward.
 \begin{proposition}\label{ABS prop1}
	Let $G$ be a graph on n vertices. Let $\eta_1\ge \eta_2\ge\dots\ge \eta_{n}$ be its $\mathcal{ABS}$- eigenvalues. Then
	 $\sum\limits_{i=1}^n\eta_i=0$, $\sum\limits_{i=1}^n \eta_i^2=2(m-H(G))$ and $\sum\limits_{1\le i<j\le n}\eta_i\eta_j=H(G)-m$.\end{proposition}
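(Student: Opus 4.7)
The plan is to use the three standard symmetric-function identities that relate the spectrum of a real symmetric matrix to traces of its powers, applied to the $\mathcal{ABS}$ matrix $\mathcal{AS}(G)$.

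First I would verify the identity $\sum_{i=1}^n \eta_i = 0$ by observing that this sum equals $\operatorname{tr}(\mathcal{AS}(G))$, and every diagonal entry $w_{ii}$ is $0$ by definition (a simple graph has no loops, so $v_i \not\sim v_i$).

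Next, for $\sum_{i=1}^n \eta_i^2$, I would use the fact that $\sum \eta_i^2 = \operatorname{tr}(\mathcal{AS}(G)^2) = \sum_{i,j} w_{ij} w_{ji}$. Since $\mathcal{AS}(G)$ is symmetric, this equals $\sum_{i,j} w_{ij}^2 = 2 \sum_{v_i \sim v_j} \frac{d_i + d_j - 2}{d_i + d_j}$. The key algebraic step is the rewrite $\frac{d_i + d_j - 2}{d_i + d_j} = 1 - \frac{2}{d_i + d_j}$, which turns the sum into $2m - 2\sum_{v_i \sim v_j} \frac{2}{d_i + d_j} = 2(m - H(G))$ by the definition of the harmonic index $H(G)$ recalled in the introduction.

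Finally, I would obtain the third identity from Newton's identity $\bigl(\sum_i \eta_i\bigr)^2 = \sum_i \eta_i^2 + 2\sum_{i<j} \eta_i \eta_j$. Substituting the first two formulas just derived gives $0 = 2(m - H(G)) + 2\sum_{i<j}\eta_i\eta_j$, and solving yields $\sum_{i<j}\eta_i\eta_j = H(G) - m$.

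There is no real obstacle here; the proof is routine and the only place where anything needs to be noticed is the simplification $\frac{d_i+d_j-2}{d_i+d_j} = 1 - \frac{2}{d_i+d_j}$ that links the $\mathcal{ABS}$ edge weights to the harmonic index, which is precisely why the formula takes the form stated. This is exactly why the authors declared the proof straightforward and omitted it.
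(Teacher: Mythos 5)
Your proof is correct and is exactly the standard trace argument the authors had in mind when they omitted the proof as straightforward: $\operatorname{tr}(\mathcal{AS}(G))=0$, $\operatorname{tr}(\mathcal{AS}(G)^2)=2\sum_{v_i\sim v_j}\left(1-\frac{2}{d_i+d_j}\right)=2(m-H(G))$, and Newton's identity for the third relation. No issues.
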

 Let $M$ be a Hermitian matrix of order $n$. We denote the eigenvalues of $M$ by $\theta_{1}(M)\ge \theta_{2}(M)\ge\cdots\ge \theta_{n}(M)$. The following lemma is the well-known Cauchy's interlacing theorem.
 \begin{lemma}{\rm\cite{horn2012matrix}}\label{cit}
 	Let $M$ be a symmetric matrix of order $n$ and let $M_k$ be its leading principal $k \times k$
 	submatrix. Then $\theta_{n-k+i}(M)\le \theta_i(M_k)\le \theta_i(M) $ for $i=1,2,\dots,k$.
 \end{lemma}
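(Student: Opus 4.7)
The statement is the classical Cauchy interlacing theorem, so the plan is to give the standard min-max proof rather than invent something new. I would set things up around the Courant-Fischer characterization
\[
\theta_i(M)=\max_{\substack{S\le \mathbb{R}^n\\ \dim S=i}}\ \min_{\substack{x\in S\\ \|x\|=1}} x^{T}Mx
=\min_{\substack{S\le \mathbb{R}^n\\ \dim S=n-i+1}}\ \max_{\substack{x\in S\\ \|x\|=1}} x^{T}Mx,
\]
and the analogous two formulas for $M_k$ with subspaces of $\mathbb{R}^k$. The bridge between the two matrices is the natural isometric embedding $\iota:\mathbb{R}^k\hookrightarrow\mathbb{R}^n$ that pads a vector with $n-k$ zeros at the tail; the key observation is that for every $x\in\mathbb{R}^k$ one has $(\iota x)^{T}M(\iota x)=x^{T}M_k x$ and $\|\iota x\|=\|x\|$, because $M_k$ is exactly the leading principal block of $M$.

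For the upper inequality $\theta_i(M_k)\le \theta_i(M)$, I would take the optimal $i$-dimensional subspace $S^{\ast}\le\mathbb{R}^k$ realizing the max-min for $M_k$ and push it forward to $\iota(S^{\ast})\le\mathbb{R}^n$, which is still $i$-dimensional; the inner minimum is unchanged, so $\theta_i(M_k)$ is a feasible value in the max-min for $M$ and hence $\le\theta_i(M)$. For the lower inequality $\theta_{n-k+i}(M)\le\theta_i(M_k)$, I would use the dual min-max form: $\theta_i(M_k)$ is the min over $(k-i+1)$-dimensional subspaces of $\mathbb{R}^k$, and $\theta_{n-k+i}(M)$ is the min over $(n-(n-k+i)+1)=(k-i+1)$-dimensional subspaces of $\mathbb{R}^n$; the same padding trick turns any feasible subspace on the $k$-side into a feasible subspace on the $n$-side with the same inner maximum, giving the bound.

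The one arithmetic step that needs care, and the only thing I would call even a minor obstacle, is checking that the co-dimension index $n-i+1$ translates correctly under the substitution $i\mapsto n-k+i$ so that the two min-formulations match on subspaces of the same dimension $k-i+1$; once that is written out, both inequalities follow from a single ``restrict or extend by zero'' argument. As a sanity check I would verify the extreme cases $k=n$ (both bounds collapse to equality) and $i=1, i=k$ (recovering the Rayleigh quotient bounds for the largest and smallest eigenvalues), which is how I would also confirm the index bookkeeping is right before writing the final version.
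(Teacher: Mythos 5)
The paper does not prove this lemma at all: it is quoted verbatim from Horn and Johnson \cite{horn2012matrix} as a known result, so there is no in-paper argument to compare against. Your Courant--Fischer proof is the standard one and is correct — the zero-padding embedding preserves the quadratic form and the norm, the index bookkeeping $n-(n-k+i)+1=k-i+1$ checks out, and both inequalities follow as you describe — so it would serve as a valid self-contained proof of the cited result.
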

 
 \begin{theorem}\label{one eig}
	Let $G$ be a graph on n vertices. Then the $\mathcal{ABS}$ eigenvalues of G are all equal if and only if  $G\cong p K_2\cup qK_1$, where $p+q=n$.
\end{theorem}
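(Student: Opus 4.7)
The plan is to reduce the condition ``all $\mathcal{ABS}$ eigenvalues equal'' to ``$\mathcal{AS}(G)$ is the zero matrix'' via Proposition \ref{ABS prop1}, and then read off the structural consequence directly from the entry formula of $\mathcal{AS}(G)$.

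For the forward direction, I would argue as follows. Suppose $\eta_1 = \eta_2 = \cdots = \eta_n =: \eta$. By Proposition \ref{ABS prop1} the eigenvalues sum to zero, so $n\eta = 0$, giving $\eta = 0$. Since $\mathcal{AS}(G)$ is real symmetric and all its eigenvalues vanish, it is orthogonally similar to the zero matrix, hence equals the zero matrix. Now look at any edge $v_iv_j \in E(G)$: the corresponding entry is $w_{ij} = \sqrt{(d_i+d_j-2)/(d_i+d_j)}$, and this equals $0$ iff $d_i + d_j = 2$. Because $v_iv_j$ is an edge, $d_i, d_j \geq 1$, forcing $d_i = d_j = 1$. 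Thus every edge of $G$ joins two vertices of degree exactly one, which means each connected component containing an edge is a single $K_2$; all remaining components are isolated vertices. Hence $G \cong pK_2 \cup qK_1$ for some $p,q$ with $2p + q = n$.

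For the reverse direction, if $G \cong pK_2 \cup qK_1$ then every vertex has degree $0$ or $1$, and for every edge both endpoints have degree $1$; hence every nonzero-placement entry $w_{ij}$ of $\mathcal{AS}(G)$ equals $\sqrt{(1+1-2)/(1+1)} = 0$, so $\mathcal{AS}(G)$ is the zero matrix and all its eigenvalues are $0$.

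I do not anticipate any real obstacle here. The only substantive content is the observation that $n\eta=0$ collapses the constant-spectrum hypothesis to $\mathcal{AS}(G)=0$, after which the entry formula immediately pins down the graph structure. One small point to state cleanly is the equivalence $w_{ij}=0 \iff d_i=d_j=1$ for an existing edge $v_iv_j$, which depends on the inequality $d_i,d_j\geq 1$ coming from the edge itself; otherwise someone might worry about the boundary case of isolated vertices, but those contribute zero rows/columns to $\mathcal{AS}(G)$ by definition and cause no issue.
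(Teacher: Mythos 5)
Your proof is correct, but it reaches the structural conclusion by a different mechanism than the paper. Both arguments begin identically: the trace condition $\sum_i \eta_i = 0$ from Proposition \ref{ABS prop1} forces the common eigenvalue to be $0$. From there the paper argues by contradiction using Cauchy's interlacing theorem (Lemma \ref{cit}): if some component had at least three vertices, it would contain an edge $xy$ with $d(x)\ge 2$, the corresponding $2\times 2$ principal submatrix would have a nonzero off-diagonal entry and hence a negative eigenvalue, and interlacing would force $\eta_n<0$. You instead invoke the spectral theorem: a real symmetric matrix with all eigenvalues zero is orthogonally similar to, hence equal to, the zero matrix, and then the entry formula $w_{ij}=\sqrt{(d_i+d_j-2)/(d_i+d_j)}$ together with $d_i,d_j\ge 1$ for an edge pins down $d_i=d_j=1$, so every component containing an edge is a $K_2$. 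Your route is slightly more elementary (it avoids interlacing entirely) and is arguably cleaner, since it identifies the matrix as zero rather than merely locating one offending principal minor; the paper's interlacing argument has the advantage of being the same tool it reuses in Theorem \ref{minimum}, so it keeps the section's machinery uniform. The converse direction is identical in both.
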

\begin{proof}
	Suppose that the $\mathcal{ABS}$ eigenvalues of $G$ are all equal. Then by Proposition~\ref{ABS prop1}, $\sum\limits_{i}^n \eta_i=0$, and so the   $\mathcal{ABS}$ eigenvalues of $G$ are  zeros. Let $H$ be a component of $G$. If $|V(H)|\ge 3$, then there exists a vertex $x$ in $H$ of degree at least two. Let $y$ be a vertex of $H$ adjacent to $x$. Then the principal minor of $\mathcal{AS}(G)$ corresponding to the vertices $x$ and $y$ is non-zero. Thus, by Cauchy's interlacing theorem (see Lemma \ref{cit}), the least eigenvalue of $\mathcal{AS}(G)$ is non-zero, a contradiction. Hence, $|V(H)|\le 2.$ Therefore, $G\cong pK_2\cup qK_1$, where $p+q=n$. 
	Conversely, if $G\cong pK_2\cup qK_1$, then all the  entries of $\mathcal{AS}(G)$ are zeros. Thus, $\eta_1=\eta_2=\dots=\eta_n=0.$
\end{proof}
The diameter of a graph $G$ is the maximum distance between any pair of vertices in $G$ and it is denoted by $diam(G)$. In the following theorem, we characterize connected graphs with $\eta_{n}(G)> -1$.
\begin{theorem}\label{minimum}
Let G be connected graph on n vertices. Then $\eta_{n}(G)> -1$ if and only if $G\cong K_{n}$ or $P_{3}$.	
\end{theorem}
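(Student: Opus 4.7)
The plan is to verify the ``if'' direction by direct computation, and prove the ``only if'' by splitting on whether $G$ contains an induced $P_{4}$; in each case I would exhibit a principal submatrix of $\mathcal{AS}(G)$ whose least eigenvalue is at most $-1$ and invoke Cauchy's interlacing (Lemma~\ref{cit}). For the forward direction, $\mathcal{AS}(K_{n})=\sqrt{(n-2)/(n-1)}\,(J-I)$ gives $\eta_{n}(K_{n})=-\sqrt{(n-2)/(n-1)}>-1$, while a direct $3\times 3$ computation yields $\eta_{3}(P_{3})=-\sqrt{2/3}>-1$. Since for $n\le 3$ every connected graph is already $K_{n}$ or $P_{3}$, for the converse I may assume $n\ge 4$ with $G\not\cong K_{n}$ and aim to derive $\eta_{n}(G)\le -1$.

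\textbf{Case 1: $G$ contains an induced $P_{4}$ on $u_{1}u_{2}u_{3}u_{4}$.} The $4\times 4$ principal submatrix $M$ of $\mathcal{AS}(G)$ on $\{u_{1},u_{2},u_{3},u_{4}\}$ is, because the $P_{4}$ is induced, the weighted adjacency matrix of a path, hence bipartite. Each weight $w_{ij}=\sqrt{1-2/(d_{i}+d_{j})}$ is non-decreasing in the degrees in $G$, which are bounded below by the corresponding $P_{4}$-degrees, so $M$ dominates $\mathcal{AS}(P_{4})$ entrywise. Perron--Frobenius applied to these nonnegative symmetric matrices yields $\rho(M)\ge \rho(\mathcal{AS}(P_{4}))$ for the spectral radii, and the bipartite symmetry of the spectrum of a weighted path gives $\eta_{\min}=-\rho$ in both cases, so $\eta_{4}(M)\le \eta_{4}(\mathcal{AS}(P_{4}))$. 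Diagonalizing $\mathcal{AS}(P_{4})$ via its reflection symmetry $u_{i}\leftrightarrow u_{5-i}$ gives $\eta_{4}(\mathcal{AS}(P_{4}))=-(3\sqrt{2}+\sqrt{66})/12<-1$, whence Lemma~\ref{cit} forces $\eta_{n}(G)\le \eta_{4}(M)<-1$.

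\textbf{Case 2: $G$ is $P_{4}$-free.} A connected $P_{4}$-free graph on at least two vertices is a cograph and thus decomposes as a join $G=G_{1}\vee G_{2}$ with both parts nonempty. Since $G\not\cong K_{n}$, some part (say $G_{1}$) contains a non-adjacent pair $a,b$. For any $c\in V(G_{2})$, the set $\{a,c,b\}$ induces a $P_{3}$ in $G$, and the corresponding $3\times 3$ principal submatrix of $\mathcal{AS}(G)$ has eigenvalues $0$ and $\pm\sqrt{w_{ac}^{2}+w_{bc}^{2}}$. Since $a,b$ are adjacent to every vertex of $G_{2}$ and $c$ to every vertex of $G_{1}$, we have $d_{a}+d_{c}\ge n$ and $d_{b}+d_{c}\ge n$, so $w_{ac}^{2}+w_{bc}^{2}\ge 2-4/n\ge 1$ for $n\ge 4$. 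Lemma~\ref{cit} then gives $\eta_{n}(G)\le -\sqrt{2-4/n}\le -1$.

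The main obstacle I expect is the monotonicity step in Case~1: to combine Perron--Frobenius entry-monotonicity of $\rho$ with the bipartite identity $\eta_{\min}=-\rho$ and thereby transfer the bound from the fixed matrix $\mathcal{AS}(P_{4})$ to the $P_{4}$-submatrix of $\mathcal{AS}(G)$ (whose weights depend on $G$'s degrees), followed by an explicit diagonalization of $\mathcal{AS}(P_{4})$. Case~2, by contrast, is comparatively routine once the cograph structure is exploited.
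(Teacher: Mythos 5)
Your proposal is correct, but it reaches the converse by a genuinely different route than the paper. The paper never leaves the world of induced $P_3$'s: for any connected $G\not\cong P_3$ with $\mathrm{diam}(G)\ge 2$ it picks an induced path $x\hbox{-}y\hbox{-}z$, computes the eigenvalues $0,\pm\sqrt{2}\sqrt{1-\tfrac{1}{d(y)+d(x)}-\tfrac{1}{d(y)+d(z)}}$ of the corresponding $3\times 3$ principal submatrix, and runs a multi-level case analysis on the degrees of $x,y,z$ and of a further neighbour $s$ (with $P_4$ handled by a separate numerical check) to force this quantity to be at least $1$; interlacing then finishes. You instead split on whether $G$ contains an induced $P_4$. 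When it does, your combination of entrywise monotonicity of the spectral radius (the paper's Lemma~\ref{b_1b_2}), the bipartite symmetry $\eta_{\min}=-\rho$ for weighted paths, and the explicit value $-(3\sqrt{2}+\sqrt{66})/12\approx -1.0306$ for $\mathcal{AS}(P_4)$ is a clean one-shot argument that replaces most of the paper's subcases. When $G$ is $P_4$-free you invoke the cograph join decomposition $G=G_1\vee G_2$, which gives the strong bounds $d_a+d_c\ge n$ and $d_b+d_c\ge n$ and hence $\eta_n\le-\sqrt{2-4/n}\le-1$ for $n\ge 4$; this is correct and quite elegant, though it imports an external structural theorem (connected $P_4$-free graphs are joins) that the paper does not need. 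In short: your argument trades the paper's elementary but fiddly degree bookkeeping for two cleaner conceptual steps, at the cost of relying on the cograph structure theorem; both proofs are valid, and your $P_3$ eigenvalue computation in Case~2 and the endpoint values $\eta_n(K_n)=-\sqrt{(n-2)/(n-1)}$, $\eta_3(P_3)=-\sqrt{2/3}$ agree with the paper's.
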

\begin{proof}
Assume that $diam(G)\ge 2$ and  $G\ncong P_3$. Let $x-y-z$ be an induced path in $G$.  
Then either $d(y)\ge 3$, $d(x)\ge 2$ or $d(z)\ge 2$. Let $\mathcal{AS}[p,q,r]$ denote the principal submatrix of $\mathcal{AS}(G)$ corresponding to the vertices $p,q$ and $r$, where $p-q-r$ is an induced path in $G$. Let $\theta_{1}[p-q-r]\ge\theta_{2}[p-q-r]\ge\theta_{3}[p-q-r]$ be the  eigenvalues of $\mathcal{AS}[p,q,r]$. Then 
\begin{eqnarray*}
\theta_{1}[p-q-r]&=&\sqrt{2}\,\,\sqrt{1-\dfrac{1}{d(q)+d(r)}-\dfrac{1}{d(q)+d(p)}};\\
\theta_{2}[p-q-r]&=&0;\\
\theta_{3}[p-q-r]&=&-\sqrt{2}\,\,\sqrt{1-\dfrac{1}{d(q)+d(r)}-\dfrac{1}{d(q)+d(p)}}.
\end{eqnarray*}
Also, by Cauchy's interlacing theorem (see Lemma \ref{cit}), $\eta_{n}(G)\le\theta_{3}[p-q-r]$.
\begin{figure}[H]
\includegraphics[width=0.9\textwidth]{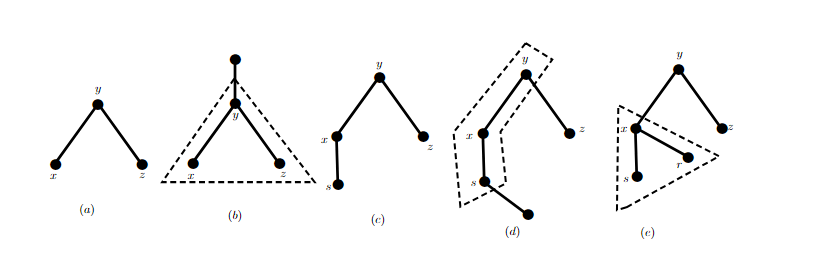}
\caption{Graphs considered in the proof of Theorem \ref{minimum}.}
\label{fig 1}
\end{figure}
\noindent
Case 1: Let $d(y)\ge 3$, $d(x)\ge1$ and $d(z)\ge1$ (see Fig.~\ref{fig 1}(b)). Then\\ $2\left(1-\dfrac{1}{d(y)+d(z)}-\dfrac{1}{d(y)+d(x)}\right)\ge 1$. Thus, 
$\eta_{n}(G)\le\theta_{3}[x-y-z]\le -1$.\\[4mm]
\noindent
Case 2: Let $d(y)=2$, $d(x)\ge2$ and $d(z)\ge1$. If $d(z)\ge 2$, then\\ $2\left(1-\dfrac{1}{d(y)+d(z)}-\dfrac{1}{d(y)+d(x)}\right)\ge 1$. So, $\eta_{n}(G)\le\theta_{3}[x-y-z]\le -1$. Otherwise, $d(z)=1$. Let $s$ be a vertex adjacent with the vertex $x$ in $G$ (see Fig. 1(c)). If $G\cong P_{4}$, then $\eta_{n}(G)=-1.0306<-1$. Suppose $G\ncong P_{4}$, then either $d(x)\ge3$ or $d(s)\ge 2$.\\[2mm]
Subcase 2.1: Let $d(x)\ge2$ and $d(s)\ge 2$ (see Fig. \ref{fig 1}(d)). Then\\ $2\left(1-\dfrac{1}{d(s)+d(x)}-\dfrac{1}{d(y)+d(x)}\right)\ge 1$. Thus, $\eta_n(G)\le\theta_3[s-x-y]\le -1$.\\[4mm]
Subcase 2.2: Let $d(x)\ge 3$ and $d(s)=1$. Then there exists a vertex $r$ adjacent with the vertex $x$ in $G$ (see Fig. 1(e)).Therefore, $2\left(1-\dfrac{1}{d(s)+d(x)}-\dfrac{1}{d(r)+d(x)}\right)\ge 1$. Thus, $\eta_n(G)\le\theta_3[s-x-r]\le -1$.\\[2mm]
Thus for a connected graph $G\ncong P_3$ with $diam(G)\ge 2$, $\eta_{n}(G)\le -1.$ Hence $G\cong K_{n}$ or $P_{3}$.  Conversely, $\eta_{n}(K_{n})=-\sqrt{\dfrac{n-2}{n-1}}>-1$ and $\eta_{3}(P_{3})=-0.81649>-1$. This completes the proof of the theorem. 
\end{proof}
\begin{corollary}\label{complete}
Let $G$ be a connected graph of order $n\ge2$. Then $\eta_n=-\sqrt{\dfrac{n-2}{n-1}}$ if and only if $G\cong K_n$.
\end{corollary}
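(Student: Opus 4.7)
The plan is to split the corollary into its two implications and to lean heavily on Theorem \ref{minimum} for the nontrivial direction.

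For sufficiency, I would proceed by direct computation on $K_n$. Every vertex of $K_n$ has degree $n-1$, so each off-diagonal entry of $\mathcal{AS}(K_n)$ equals $\sqrt{(n-2)/(n-1)}$. Thus $\mathcal{AS}(K_n) = \sqrt{(n-2)/(n-1)}\,(J-I)$, whose spectrum consists of $\sqrt{(n-2)/(n-1)}\,(n-1)$ with multiplicity one and $-\sqrt{(n-2)/(n-1)}$ with multiplicity $n-1$; in particular $\eta_n(K_n) = -\sqrt{(n-2)/(n-1)}$, as required. This also handles $n=2$ trivially, since both sides reduce to $0$.

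For necessity, suppose $\eta_n(G) = -\sqrt{(n-2)/(n-1)}$. Since $(n-2)/(n-1)<1$ for every $n\ge 2$, we have $\eta_n(G) > -1$, so Theorem \ref{minimum} immediately narrows $G$ to $K_n$ or $P_3$. The only case left to exclude is $n=3$ with $G\cong P_3$: a one-line spectral calculation (the two nonzero entries of $\mathcal{AS}(P_3)$ both equal $1/\sqrt{3}$) yields $\eta_3(P_3) = -\sqrt{2/3}$, while $-\sqrt{(n-2)/(n-1)} = -\sqrt{1/2}$ when $n=3$. Because $\sqrt{2/3}\ne\sqrt{1/2}$, the case $G\cong P_3$ is ruled out, forcing $G\cong K_n$.

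I do not anticipate any real obstacle: the entire argument rides on Theorem \ref{minimum}, and the residual work is a two-line eigenvalue computation for $K_n$ and $P_3$. The only pitfall to guard against is forgetting that, for $n=3$, Theorem \ref{minimum} leaves two candidates rather than one, so the $P_3$ case must be explicitly dismissed.
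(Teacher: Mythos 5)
Your proposal is correct and follows essentially the same route as the paper: apply Theorem \ref{minimum} after noting $-\sqrt{(n-2)/(n-1)}>-1$, then exclude $P_3$ by comparing $-\sqrt{2/3}$ with $-\sqrt{1/2}$, with the converse a direct computation on $K_n$. No issues.
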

\begin{proof}
Suppose $\eta_n=-\sqrt{\dfrac{n-2}{n-1}}$. Then by Theorem \ref{minimum}, $G\cong K_{n}$ or $P_{3}$. Since $n_{3}(P_{3})=-\sqrt{\dfrac{2}{3}}~\left(\neq -\sqrt{\dfrac{1}{2}}\right)$, $G\cong K_{n}$. The converse part is direct. 
\end{proof}
Let $B_1$ and $B_2$ be two real matrices of same order, we write $B_1\preceq B_2$ if every entry in $B_1$ does not exceed the counterpart in $B_2$. The following lemma is useful to prove our next result. \begin{lemma}{\rm\cite{horn2012matrix}}\label{b_1b_2}
	{Let $B_1, B_2$ be non-negative matrices of order $n$. If $B_1\preceq B_2$, then $\rho(B_1)\le \rho(B_2)$. Further, if $B_1$ is irreducible and $B_1\neq B_2$, then $\rho(B_1)< \rho(B_2)$.}
\end{lemma}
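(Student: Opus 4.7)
The plan is to split the lemma into the weak monotonicity statement and the strict separation under irreducibility, handling them by different tools: Gelfand's spectral radius formula for the first, and the Perron--Frobenius theorem for the second.

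For the weak inequality $\rho(B_1)\le\rho(B_2)$, I would argue first by a routine induction that $0\preceq B_1^k\preceq B_2^k$ for every $k\ge 1$: the product of non-negative matrices is non-negative, and if $A\preceq B$ and $C\preceq D$ with all four matrices non-negative, then $AC\preceq BD$. Then, fixing any monotone matrix norm --- for instance the maximum-entry norm $\|A\|_{\max}=\max_{i,j}|a_{ij}|$, or the Frobenius norm --- the componentwise comparison gives $\|B_1^k\|\le\|B_2^k\|$. Passing to $k$-th roots and letting $k\to\infty$, Gelfand's formula $\rho(M)=\lim_{k\to\infty}\|M^k\|^{1/k}$ delivers $\rho(B_1)\le\rho(B_2)$ immediately.

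For the strict inequality, assume additionally that $B_1$ is irreducible and $B_1\neq B_2$. By the Perron--Frobenius theorem, there is a strictly positive right eigenvector $x>0$ with $B_1 x=\rho(B_1)\,x$. The crucial observation is that $B_2$ also inherits irreducibility from $B_1$: the directed support graph of $B_2$ contains that of $B_1$ (since $B_2\succeq B_1\ge 0$ forces every nonzero entry of $B_1$ to remain nonzero in $B_2$), so strong connectivity transfers from $B_1$ to $B_2$. Hence $B_2$ too admits a strictly positive \emph{left} Perron eigenvector $y>0$ satisfying $y^{T}B_2=\rho(B_2)\,y^{T}$. Evaluating the scalar $y^{T}B_2 x$ in two ways then yields
\[
\rho(B_2)\,y^{T}x \;=\; y^{T}B_2 x \;\ge\; y^{T}B_1 x \;=\; \rho(B_1)\,y^{T}x,
\]
and because $y,x>0$ componentwise while $B_2-B_1\succeq 0$ has at least one strictly positive entry, the middle inequality is strict; dividing through by the positive scalar $y^{T}x$ gives $\rho(B_1)<\rho(B_2)$.

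The subtle point, and precisely the reason the irreducibility hypothesis on $B_1$ cannot be dropped, is ensuring that both $x$ and $y$ are \emph{strictly} positive: without this, the entries where $B_2$ strictly exceeds $B_1$ could be multiplied by zero components of the Perron vectors and the inequality would collapse to equality. Transferring irreducibility from $B_1$ to $B_2$ via the support-graph inclusion is the linchpin; once that is in hand, the Perron--Frobenius theorem supplies the two strictly positive eigenvectors and the remainder is a one-line computation.
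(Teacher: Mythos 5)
Your proof is correct. Note that the paper does not prove this lemma at all --- it is quoted directly from Horn and Johnson \cite{horn2012matrix} --- so there is no in-paper argument to compare against; your two-step treatment (entrywise monotonicity of powers plus Gelfand's formula for $\rho(B_1)\le\rho(B_2)$, then the right Perron vector of $B_1$ paired with the left Perron vector of $B_2$ for strictness) is the standard textbook proof, and you correctly identify the one delicate point, namely that irreducibility must be transferred from $B_1$ to $B_2$ via the inclusion of support digraphs so that both Perron vectors are strictly positive.
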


\begin{theorem}\label{complete 1}
Let $G$ be a connected graph of order $n>2$. Then $\mathcal{AS}(G)$ has exactly two distinct eigenvalues if and only if $G\cong K_n$.
\end{theorem}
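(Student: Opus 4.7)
The plan is to split into the easy converse and the forward direction. For the converse, $K_n$ is $(n-1)$-regular, so $\mathcal{AS}(K_n)=\sqrt{(n-2)/(n-1)}\,(J-I)$, which visibly has exactly two distinct eigenvalues: $\sqrt{(n-1)(n-2)}$ (simple) and $-\sqrt{(n-2)/(n-1)}$ with multiplicity $n-1$. This is a one-line computation.

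For the forward direction, suppose $\mathcal{AS}(G)$ has exactly two distinct eigenvalues $\eta_1>\eta_2$. Then its minimal polynomial is $(x-\eta_1)(x-\eta_2)$, giving the identity
\[
\mathcal{AS}(G)^2 \;=\; (\eta_1+\eta_2)\,\mathcal{AS}(G)\;-\;\eta_1\eta_2\,I.
\]
I would read off the $(i,j)$ entry for $i\neq j$, which yields $\sum_{k} w_{ik}w_{kj}=(\eta_1+\eta_2)\,w_{ij}$. The decisive observation is that every entry $w_{kl}$ of $\mathcal{AS}(G)$ is non-negative; so whenever $v_i\not\sim v_j$, the right-hand side is $0$, and hence every term $w_{ik}w_{kj}$ on the left must vanish. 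Translated back to graphs: any two non-adjacent vertices of $G$ share no common neighbour.

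Finally I would run the standard shortest-path argument. Assume for contradiction that $G\ncong K_n$; then $G$ has a pair of non-adjacent vertices, and by connectedness there is a shortest path $u_0-u_1-u_2-\cdots-u_k$ joining such a pair, with $k\ge 2$. Now $u_0\not\sim u_2$ (otherwise the path would not be shortest), yet $u_0$ and $u_2$ both neighbour $u_1$ — contradicting what was established above. Hence $G\cong K_n$.

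The main obstacle, and really the only non-routine step, is noticing that the minimal polynomial identity together with non-negativity of the $w_{ij}$'s forces a purely combinatorial rigidity (no common neighbour for any non-adjacent pair). Once that bridge from spectrum to structure is in place, the connectedness/shortest-path contradiction is immediate and Theorem~\ref{minimum} is not even needed. One could alternatively try to route the argument through Theorem~\ref{minimum} (observing that $P_3$ has three distinct $\mathcal{ABS}$-eigenvalues $\pm\sqrt{2/3},0$), but this would still require separately disposing of the case $\eta_n\le -1$, so the minimal-polynomial route seems cleaner.
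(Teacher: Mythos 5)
Your proof is correct, but it takes a genuinely different route from the paper's. You run the classical minimal-polynomial argument: from $(\mathcal{AS}(G)-\eta_1 I)(\mathcal{AS}(G)-\eta_2 I)=0$ you read off the off-diagonal entries and use non-negativity to conclude that non-adjacent vertices share no common neighbour, after which the shortest-path argument forces $G\cong K_n$. One step you should make explicit: passing from ``every term $w_{ik}w_{kj}$ vanishes'' to ``no common neighbour'' requires $w_{ik}>0$ whenever $v_i\sim v_k$, which is not automatic for the $\mathcal{ABS}$ matrix (an edge joining two degree-one vertices has entry $0$). It does hold here because $G$ is connected with $n>2$, so every edge has an endpoint of degree at least $2$, hence $d_i+d_k\ge 3$ and $w_{ik}=\sqrt{1-2/(d_i+d_k)}>0$; this is where the hypothesis $n>2$ and connectedness enter your argument. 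The paper instead uses Perron--Frobenius to see that $\eta_1$ is simple, so $\eta_2=\cdots=\eta_n$ and $\eta_n=-\eta_1/(n-1)$; an entrywise comparison of $\mathcal{AS}(G)$ with $\sqrt{(n-2)/(n-1)}\,A(K_n)$ bounds $\eta_1$ and gives $\eta_n>-1$, at which point Theorem~\ref{minimum} leaves only $K_n$ and $P_3$, and $P_3$ is excluded by direct computation. Your approach is more elementary and self-contained (it needs neither Theorem~\ref{minimum} nor Lemma~\ref{b_1b_2}), whereas the paper's proof leverages Theorem~\ref{minimum} as the structural engine; both are valid.
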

\begin{proof}
Suppose $G$ has exactly two distinct eigenvalues. Since $G$ is a connected graph of order $n> 2$, the matrix $\mathcal{AS}(G)$ is irreducible, and thus by Perron-Frobenius theory its largest eigenvalue, i.e., $\eta_{1}(G)$ is a simple eigenvalue of $G$. So, $\eta_{1}(G)$ and $\eta_{n}(G)$ are the two distinct eigenvalues of $G$, and $\eta_{2}(G)=\eta_{3}(G)=\cdots=\eta_{n}(G)$.  
Let $B=\sqrt{\dfrac{n-2}{n-1}} A(K_n)$. Then the eigenvalues of $B$ are $(n-1)\sqrt{\dfrac{n-2}{n-1}},\underbrace{-\sqrt{\dfrac{n-2}{n-1}},\ldots,-\sqrt{\dfrac{n-2}{n-1}}}_{n-1}$. Since $\mathcal{AS}(G)\preceq B$, $\eta_{1}(G)\le (n-1)\sqrt{\dfrac{n-2}{n-1}}$ by Lemma \ref{b_1b_2}. Therefore, $-(n-1)\eta_{n}(G)\le(n-1)\sqrt{\dfrac{n-2}{n-1}}$. That is, $\eta_{n}(G)\ge -\sqrt{\dfrac{n-2}{n-1}}>-1$. Hence by Theorem \ref{minimum}, $G\cong K_{n}$ or $P_{3}$. So, $G\cong K_{n}$ because $P_{3}$ has three distinct eigenvalues. The converse part is straightforward.    
\end{proof}
The following lemma is important to prove our next result.
\begin{lemma}{\rm\cite{horn2012matrix}}\label{origin}
	Let $C\in M_{n,m}$, $q=min\left\{n,m\right\}$,  $\sigma_1\ge \sigma_2 \ge \dots \ge \sigma_q$ be the ordered singular values of $C$, and define the Hermitian matrix $\mathcal{H}=\begin{bmatrix}
		0 & C\\ C^* & 0
	\end{bmatrix}$. The ordered eigenvalues of $\mathcal{H}$ are $-\sigma_ 1\le \dots \le -\sigma_q\le 0=\dots= 0\le \sigma_q\le \dots \le\sigma_1$.
\end{lemma}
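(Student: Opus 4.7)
The plan is to lift the singular value decomposition (SVD) of $C$ to an explicit orthonormal eigenbasis of the $(n+m)\times(n+m)$ Hermitian matrix $\mathcal{H}$. Concretely, I would write $C=U\Sigma V^{*}$ with $U\in M_{n}$, $V\in M_{m}$ unitary and $\Sigma$ the $n\times m$ matrix whose $(i,i)$-entry equals $\sigma_{i}$ for $1\le i\le q=\min\{n,m\}$. Writing $u_{i}$ and $v_{i}$ for the $i$-th columns of $U$ and $V$, the SVD reduces to the two identities $Cv_{i}=\sigma_{i}u_{i}$ and $C^{*}u_{i}=\sigma_{i}v_{i}$ for $1\le i\le q$, which are the only facts the rest of the proof uses.

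The main step is a single block multiplication. For each $i=1,\dots,q$ I would compute
\[
\mathcal{H}\begin{bmatrix}u_{i}\\ \pm v_{i}\end{bmatrix}=\begin{bmatrix}\pm Cv_{i}\\ C^{*}u_{i}\end{bmatrix}=\pm\sigma_{i}\begin{bmatrix}u_{i}\\ \pm v_{i}\end{bmatrix},
\]
so that the normalized vectors $\tfrac{1}{\sqrt{2}}(u_{i}^{\top},\pm v_{i}^{\top})^{\top}$ are orthonormal eigenvectors of $\mathcal{H}$ with eigenvalues $+\sigma_{i}$ and $-\sigma_{i}$, respectively; pairwise orthonormality of the resulting $2q$ vectors is immediate from the unitarity of $U$ and $V$. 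To account for the remaining $n+m-2q=|n-m|$ eigenvalues, I would then use the \emph{leftover} singular vectors: if $n>m$ (so $q=m$) the columns $u_{m+1},\dots,u_{n}$ lie in $\ker C^{*}$ and therefore produce null vectors $(u_{i}^{\top},0)^{\top}$ of $\mathcal{H}$, while symmetrically if $m>n$ the columns $v_{n+1},\dots,v_{m}$ produce null vectors $(0,v_{i}^{\top})^{\top}$. These supply exactly the missing $|n-m|$ zero eigenvalues.

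The hard part is bookkeeping rather than analysis: once the SVD is in hand the eigenvalue equations are essentially a one-line block computation, and genuine care is needed only in the rectangular case to verify that the $2q+|n-m|=n+m$ vectors constructed are pairwise orthogonal and span the whole space, so that nothing has been missed or double-counted (in particular when some $\sigma_{i}$ already vanish). Arranging the so-produced eigenvalues in non-decreasing order then reproduces the claimed spectrum $-\sigma_{1}\le\cdots\le -\sigma_{q}\le 0=\cdots=0\le \sigma_{q}\le\cdots\le \sigma_{1}$, completing the proof.
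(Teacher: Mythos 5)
Your proposal is correct: the block computation $\mathcal{H}(u_i^{\top},\pm v_i^{\top})^{\top}=\pm\sigma_i(u_i^{\top},\pm v_i^{\top})^{\top}$, together with the $|n-m|$ null vectors built from the leftover columns of $U$ or $V$, yields a full orthonormal eigenbasis of size $2q+|n-m|=n+m$, and the repeated-zero case causes no double counting since the vectors $\tfrac{1}{\sqrt2}(u_i^{\top},v_i^{\top})^{\top}$ and $\tfrac{1}{\sqrt2}(u_i^{\top},-v_i^{\top})^{\top}$ remain orthogonal even when $\sigma_i=0$. The paper itself gives no proof of this lemma --- it is quoted directly from Horn and Johnson --- and your SVD-lifting argument is precisely the standard proof given there, so there is nothing to compare beyond noting that you have correctly reconstructed it.
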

\begin{theorem}
Let $G$ be a graph of order $n$. Let $M=(m_{ij})_{n\times n}$ be a non-negative symmetric matrix of order $n$, where $m_{ij}$ is positive if and only if $v_{i} \sim v_{j}$. Then the graph $G$ is bipartite if and only if the eigenvalues of the matrix $M$ are symmetric about origin.
\end{theorem}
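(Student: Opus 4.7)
The plan is to prove both directions of the equivalence by translating between the block structure of $M$ (for the forward direction) and trace identities (for the converse).

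For the forward direction, I would assume $G$ is bipartite with bipartition $V(G)=V_{1}\cup V_{2}$, and reorder the vertices so that the vertices of $V_{1}$ precede those of $V_{2}$. Under this labeling, the hypothesis that $m_{ij}>0$ if and only if $v_{i}\sim v_{j}$ forces $M$ to take the block form
\[
M=\begin{bmatrix} 0 & C \\ C^{T} & 0 \end{bmatrix},
\]
since no two vertices inside the same part are adjacent. Lemma~\ref{origin} then gives immediately that the spectrum of $M$ consists of $\pm\sigma_{i}$ together with possibly some zeros, which is symmetric about the origin. (A similarity by a permutation matrix does not change eigenvalues, so reordering is harmless.)

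For the converse, I would argue combinatorially via odd power traces. If the eigenvalues of $M$ are symmetric about the origin, then for every odd integer $\ell\ge 1$,
\[
\operatorname{tr}(M^{\ell})=\sum_{i=1}^{n}\eta_{i}^{\ell}=0.
\]
On the other hand, expanding the trace yields
\[
\operatorname{tr}(M^{\ell})=\sum_{i_{0},i_{1},\dots,i_{\ell-1}} m_{i_{0}i_{1}}\,m_{i_{1}i_{2}}\cdots m_{i_{\ell-1}i_{0}},
\]
a sum of non-negative terms because every entry of $M$ is non-negative. Hence each summand must vanish, so there is no sequence $v_{i_{0}},v_{i_{1}},\dots,v_{i_{\ell-1}},v_{i_{0}}$ with all consecutive entries $m_{i_{j}i_{j+1}}$ strictly positive. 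By the defining hypothesis on $M$, this is exactly the statement that $G$ contains no closed walk of odd length $\ell$.

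The key (and only slightly delicate) step is the last implication: a graph with no closed walk of odd length has no odd cycle, and conversely an odd cycle in $G$ would give a closed walk of odd length. I would invoke the classical characterization that $G$ is bipartite if and only if it contains no odd cycle to conclude. The main obstacle, if any, is simply to make sure the combinatorial translation is valid: the condition $m_{ij}>0 \Leftrightarrow v_{i}\sim v_{j}$ together with non-negativity of $M$ ensures that ``no summand is positive'' is equivalent to ``no closed walk of that length in $G$'', which is what lets the spectral symmetry be converted into the absence of odd cycles.
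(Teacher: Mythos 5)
Your proposal is correct and follows essentially the same route as the paper: the forward direction uses the block decomposition $M=\begin{bmatrix} 0 & C\\ C^{T} & 0\end{bmatrix}$ together with Lemma~\ref{origin}, and the converse uses the vanishing of $\operatorname{tr}(M^{\ell})$ for odd $\ell$ to rule out odd cycles. The only cosmetic difference is that you expand the trace explicitly as a sum of non-negative walk products, whereas the paper routes the same observation through the equivalence $\operatorname{tr}(M^{k})>0 \iff \operatorname{tr}(A^{k})>0$ and a cited fact about $\operatorname{tr}(A^{k})$ for graphs with an odd cycle.
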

\begin{proof}Suppose $G$ is a  bipartite graph. Then the matrix $M$ can be written as $M=\begin{bmatrix} 0 & C\\
C^T & 0
\end{bmatrix}$, where $C$ is a rectangular matrix with non-negative entries. Therefore, by Lemma~\ref{origin}, the eigenvalues of $M$ are symmetric about the origin.\\
Conversely, suppose the eigenvalues of $M$ are symmetric about the origin. Then $trace(M^k)=0$ for all odd integer $k>0$.  By the definition of the matrix $M$, one can easily check that $trace(M^k(G))>0$ if and only if $trace(A^{k}(G))>0$. Now, assume that $G$ contains an odd cycle of length $k$. Then $trace(A^k(G))>0$ {(see \cite[Proposition 1.3.1]{brouwer2011spectra})}. So, $trace(M^{k})>0$, a contradiction. Thus, $G$ must be a bipartite graph.  
\end{proof}
The following corollary is immediate from the above theorem
\begin{corollary}\label{bipartite}
A graph $G$ is bipartite if and only if the eigenvalues of $\mathcal{AS}(G)$ are symmetric about origin.
\end{corollary}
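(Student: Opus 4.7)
The plan is to deduce the corollary directly from the preceding theorem, but with a small caveat: the hypothesis of that theorem requires $m_{ij}>0$ exactly when $v_i\sim v_j$, whereas the $\mathcal{ABS}$ matrix has $w_{ij}=\sqrt{(d_i+d_j-2)/(d_i+d_j)}$, which vanishes also on the edges of a $K_2$-component (where $d_i=d_j=1$). So I would first note that $\mathcal{AS}(G)$ is a non-negative symmetric matrix, and that the only edges it fails to ``see'' are the isolated-edge components, which are bipartite on their own and lie on no cycle.

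For the forward direction, I would assume $G$ is bipartite with parts $V_1,V_2$ and index the rows/columns of $\mathcal{AS}(G)$ so that $V_1$ comes first. Since $w_{ij}=0$ whenever $v_i$ and $v_j$ belong to the same part, this puts $\mathcal{AS}(G)$ in the block form $\begin{bmatrix} 0 & C\\ C^T & 0\end{bmatrix}$. Lemma~\ref{origin} then gives immediately that the spectrum of $\mathcal{AS}(G)$ is symmetric about the origin.

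For the converse, I would mimic the argument just used for the general matrix $M$. Symmetry of the spectrum about $0$ implies $\mathrm{trace}(\mathcal{AS}(G)^k)=0$ for every odd $k>0$. If $G$ contained an odd cycle $C$ of length $k$, then every vertex on $C$ has degree at least $2$ in $G$, so every edge of $C$ contributes a strictly positive weight $w_{ij}$. The closed walks that traverse $C$ therefore contribute positive terms to $\mathrm{trace}(\mathcal{AS}(G)^k)$, and since all other contributions are also non-negative, we would get $\mathrm{trace}(\mathcal{AS}(G)^k)>0$, a contradiction. Hence $G$ has no odd cycle and is bipartite.

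The only subtle point---and the reason the corollary is not literally a one-line invocation of the theorem---is precisely this need to observe that the $w_{ij}=0$ exceptional edges (those in $K_2$-components) lie on no cycle, so they cannot obstruct the odd-cycle detection via traces of odd powers. Once this is noted, the argument of the preceding theorem transfers verbatim.
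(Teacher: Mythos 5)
Your proof is correct and follows essentially the same route as the paper, which simply declares the corollary ``immediate'' from the preceding theorem. Your extra care about $K_2$-components is in fact warranted: on such an edge $w_{ij}=\sqrt{(1+1-2)/(1+1)}=0$, so $\mathcal{AS}(G)$ does not literally satisfy the theorem's hypothesis that $m_{ij}>0$ exactly when $v_i\sim v_j$, and your observation that these zero-weight edges lie on no cycle (so the block decomposition still works in one direction and the odd-cycle trace argument still works in the other) is precisely the patch needed to make the paper's one-line deduction rigorous.
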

\begin{theorem}\label{cbipartite}
A connected bipartite graph $G$ of order $n>2$ has three distinct $\mathcal{ABS}$ eigenvalues if and only if $G$ is a complete bipartite graph.
\end{theorem}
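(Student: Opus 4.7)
The plan is to handle the easy direction first: if $G\cong K_{n_1,n_2}$ with $n=n_1+n_2>2$, all edges have endpoints of degrees $n_1$ and $n_2$, so $\mathcal{AS}(G)=\sqrt{(n_1+n_2-2)/(n_1+n_2)}\,A(K_{n_1,n_2})$. The adjacency spectrum of $K_{n_1,n_2}$ is $\{\pm\sqrt{n_1n_2},0^{n-2}\}$, so $\mathcal{AS}(G)$ has exactly the three distinct eigenvalues $\pm\sqrt{n_1n_2(n_1+n_2-2)/(n_1+n_2)}$ and $0$ (the nonzero one is nonzero since $n_1+n_2>2$ and $n_1,n_2\ge 1$).

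For the converse, assume $G$ is a connected bipartite graph of order $n>2$ with exactly three distinct $\mathcal{ABS}$ eigenvalues. By Corollary~\ref{bipartite}, the spectrum of $\mathcal{AS}(G)$ is symmetric about the origin, so the three distinct eigenvalues must be of the form $\eta>0,\ 0,\ -\eta$. Since $G$ is connected, $\mathcal{AS}(G)$ is irreducible and nonnegative, so by Perron--Frobenius the largest eigenvalue $\eta$ is simple; by the symmetry just noted, $-\eta$ is also simple. Hence $\mathrm{rank}(\mathcal{AS}(G))=2$.

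Next, I would use the bipartite block decomposition: with respect to the bipartition $V(G)=V_1\cup V_2$, write
\[
\mathcal{AS}(G)=\begin{pmatrix} 0 & C\\ C^T & 0\end{pmatrix},
\]
where $C$ is $|V_1|\times|V_2|$ with $C_{ij}>0$ iff the corresponding vertices are adjacent. Then $\mathcal{AS}(G)^2=\mathrm{diag}(CC^T,C^TC)$, whose spectrum is $\{\eta^2,\eta^2,0,\dots,0\}$; since $CC^T$ and $C^TC$ share the same nonzero eigenvalues, each must contribute exactly one copy of $\eta^2$, forcing $\mathrm{rank}(C)=1$.

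The main (and only nontrivial) step is to conclude from $\mathrm{rank}(C)=1$ that $G$ is complete bipartite. Because $G$ is connected, every vertex has a neighbor, so every row and every column of $C$ is nonzero. A nonnegative rank-one matrix has the form $C=uv^T$ with $u,v$ having strictly positive entries on their nonzero coordinates, so the support of each row is the same set $S\subseteq V_2$, and by the column observation $S=V_2$. Thus every $v_i\in V_1$ is adjacent to every $v_j\in V_2$, i.e., $G\cong K_{|V_1|,|V_2|}$, completing the proof.
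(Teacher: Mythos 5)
Your proof is correct and follows essentially the same route as the paper's: Corollary~\ref{bipartite} (spectral symmetry) together with Perron--Frobenius forces the three distinct eigenvalues to be $\eta,0,-\eta$ with $0$ of multiplicity $n-2$, hence $\mathrm{rank}(\mathcal{AS}(G))=2$, and the bipartite structure then forces completeness. The only difference is cosmetic and lies in the final step: you pass to the off-diagonal block $C$ and deduce $\mathrm{rank}(C)=1$ from $\mathcal{AS}(G)^2=\mathrm{diag}(CC^T,C^TC)$ before invoking the rank-one factorization, whereas the paper argues directly that all rows indexed by one colour class lie in the span of a single row (so all vertices in each part share the same neighbourhood) --- the two finishes are equivalent.
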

\begin{proof}
From Perron Frobenius theorem and by Corollary~\ref{bipartite}, $\eta_1(G)>0$ and $\eta_{n}(G)$ are simple $\mathcal{ABS}$ eigenvalues of $G$. Suppose $G$ has a non-zero $\mathcal{ABS}$ eigenvalue other than $\eta_{1}(G)$ and $\eta_{n}(G)$. Then by Corollary \ref{bipartite}, $G$ must have at least four distinct $\mathcal{ABS}$ eigenvalues. Thus, $0$ is an $\mathcal{ABS}$ eigenvalue of $G$ with multiplicity $n-2$.  Hence, $rank(\mathcal{AS}(G))=2$. Let $U$ and $W$ be the vertex partition sets of $G$. Let $u\in U$ and $w\in W$. Since $G$ is a connected bipartite graph, the rows corresponding to the vertices $u$ and $w$ are linearly independent. Further, since $rank(\mathcal{A(S)}(G))=2$ and $G$ is bipartite, the rows corresponding to the vertices belonging to $U$(respectively, $W$) are in the linear span of the row vector corresponding to the vertex $u$ (respectively, $w$). Thus, the vertices in $U$ (respectively, $W$) share the same vertex neighborhood set.
Assume that $w\in W$ and $w\notin N(u)$. Then $w$ is not adjacent with any vertices of $U$. Therefore, $w$ is an isolated vertex of $G$, a contradiction because $G$ is a connected graph of order at least $3$. Thus, $N(u)=W$ and $N(w)=U$. Therefore, $G$ is a complete bipartite graph.\\[2mm]
Conversely, if $G$ is the complete bipartite graph $K_{n_1,n_2}$ of order $n=n_{1}+n_{2} (\ge 3)$, then $\mathcal{AS}(G)=\sqrt{1-\dfrac{2}{n_1+n_2}}A(G)$. Therefore the $\mathcal{ABS}$ eigenvalues of $G$ are\\ $\sqrt{n_{1}n_{2}\left(1-\dfrac{2}{n_1+n_2}\right)},\underbrace{0,0,\ldots,0}_{n-2}, -\sqrt{n_{1}n_{2}\left(1-\dfrac{2}{n_1+n_2}\right)}$. Thus, $G$ has exactly three distinct $\mathcal{ABS}$ eigenvalues.
\end{proof}
\section{Bounds for the ABS spectral radius}\label{sec4}
 In this section, we give some bounds for the largest $\mathcal{ABS}$ eigenvalue $\eta_{1}(G)$.
 \begin{lemma}\label{eeeelemma}
Let G be graph of order $n$ with maximum degree $\Delta$ and minimum degree $\delta$. Then the row sums of $\mathcal{AS}(G)$ are equal if and only if $G$ is a regular graph.
\end{lemma}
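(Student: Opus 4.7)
The plan is to prove the two implications separately. The forward direction (regular implies equal row sums) is essentially a one-line computation: if $G$ is $k$-regular with $k\ge 1$, then every nonzero entry of $\mathcal{AS}(G)$ equals $\sqrt{(2k-2)/(2k)} = \sqrt{(k-1)/k}$, and every row has exactly $k$ such entries, so each row sum equals the constant $\sqrt{k(k-1)}$. The case $k=0$ is trivial since then $\mathcal{AS}(G)$ is the zero matrix.

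For the converse, the key observation I would exploit is that the weight function
$$w(x,y) \;=\; \sqrt{1 - \tfrac{2}{x+y}}$$
is strictly increasing in both arguments whenever $x+y\ge 2$. Let $u$ be a vertex of degree $\delta$ and let $v$ be a vertex of degree $\Delta$. Since every neighbor of $u$ has degree at most $\Delta$, bounding each term in the row sum of $u$ from above gives
$$r_u \;\le\; \delta\,\sqrt{1 - \tfrac{2}{\delta+\Delta}}.$$
Symmetrically, every neighbor of $v$ has degree at least $\delta$, and bounding from below gives
$$r_v \;\ge\; \Delta\,\sqrt{1 - \tfrac{2}{\delta+\Delta}}.$$
The assumption $r_u = r_v$ therefore forces $\Delta \le \delta$, and combined with $\delta \le \Delta$ we conclude $\delta = \Delta$, i.e.\ $G$ is regular.

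I do not anticipate a major obstacle; the argument is driven entirely by the monotonicity of $w(x,y)$. The only delicate point is the boundary case $\delta = 0$: if $G$ has an isolated vertex, then its row sum is already $0$, so equal row sums force every row sum to vanish, which (since the nonzero entries of $\mathcal{AS}(G)$ are strictly positive) means $G$ has no edges at all and is $0$-regular. Once this edge case is dispatched, the monotonicity estimate handles the remaining situation in one shot, and no further case analysis is needed.
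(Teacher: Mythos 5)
Your main argument for the nontrivial direction is the same as the paper's: take a vertex $u$ of degree $\delta$ and a vertex $v$ of degree $\Delta$, use the monotonicity of the edge weight $\sqrt{1-2/(x+y)}$ to bound $r_u\le\delta\sqrt{1-2/(\delta+\Delta)}$ and $r_v\ge\Delta\sqrt{1-2/(\delta+\Delta)}$, and conclude $\delta=\Delta$. When $\delta\ge 1$ this works exactly as you (and the paper) say: then $\delta+\Delta\ge 3$ whenever $\delta<\Delta$, the common factor $\sqrt{1-2/(\delta+\Delta)}$ is strictly positive, and the comparison is strict.

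The gap is in your patch for the case $\delta=0$. You claim that equal row sums then force every row sum to be $0$, and hence $G$ to be edgeless, ``since the nonzero entries of $\mathcal{AS}(G)$ are strictly positive.'' But the entry attached to an edge $v_iv_j$ is $\sqrt{(d_i+d_j-2)/(d_i+d_j)}$, which equals $0$ precisely when $d_i=d_j=1$: an edge joining two pendant vertices contributes nothing to its row sums, so a vanishing row sum does not mean the vertex is isolated. This is not merely a flaw in your write-up; it is a counterexample to the lemma as stated. For $G=K_2\cup K_1$ (more generally $pK_2\cup qK_1$ with $p,q\ge 1$) the matrix $\mathcal{AS}(G)$ is the zero matrix, so all row sums are equal, yet $\delta=0<1=\Delta$. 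The paper's own proof silently has the same defect: its strict inequality $\Delta\sqrt{1-2/(\Delta+\delta)}>\sum_{v_i:u\sim v_i}\sqrt{1-2/(\delta+d_i)}$ reads $0>0$ when $\delta=0$ and $\Delta=1$. The statement becomes true, and both arguments become complete, under the extra hypothesis that $G$ has no isolated vertices (or, more precisely, after excluding the exceptional family $pK_2\cup qK_1$ with $p,q\ge1$).
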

\begin{proof}
Suppose the row sums of $\mathcal{AS}(G)$ are equal . Let $u,v\in V(G)$ such that $d(u)=\delta$ and $d(v)=\Delta$. Then\begin{eqnarray}\label{2eq1}
 	\sum\limits_{v_{i}:u\sim v_{i}}\sqrt{1-\dfrac{2}{\delta+d_i}}=\sum\limits_{v_{j}:v\sim v_{j}}\sqrt{1-\dfrac{2}{\Delta+d_j}}.
 \end{eqnarray}If $\delta\neq \Delta$, then $\sum\limits_{v_{j}:v\sim v_{j}}\sqrt{1-\dfrac{2}{\Delta+d_j}}\ge\Delta\sqrt{1-\dfrac{2}{\Delta+\delta}} >\sum\limits_{v_{i}:u\sim v_{i}}\sqrt{1-\dfrac{2}{\delta+d_i}}$, a contradiction to the equation (\ref{2eq1}). Thus, $\delta=\Delta$. i.e., $G$ is a regular graph. The converse part is straightforward.
 \end{proof}
 The following theorem gives a lower bound for $\eta_1(G)$ in terms of order and the atom-bond sum connectivity index of graph $G$.
\begin{theorem}\label{lbe2}
Let $G$ be a graph of order $n$, minimum degree $\delta$ and maximum degree  $\Delta$. Then 
   $\eta_1(G) \ge \dfrac{2\mathcal{ABS}(G)}{n}$. Further, equality holds if and only if $G$ is a regular graph.
\end{theorem}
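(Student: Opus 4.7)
The plan is to apply the Rayleigh--Ritz variational characterization of the largest eigenvalue of a real symmetric matrix using the all-ones vector $\mathbf{1}\in\mathbb{R}^n$ as a test vector. Since $\mathcal{AS}(G)$ is real symmetric, one has
\[
\eta_1(G) \;\ge\; \frac{\mathbf{1}^T \mathcal{AS}(G)\, \mathbf{1}}{\mathbf{1}^T \mathbf{1}},
\]
with equality if and only if $\mathbf{1}$ is an eigenvector of $\mathcal{AS}(G)$ corresponding to $\eta_1(G)$. This is the standard tool, and the bound follows once the two inner products are evaluated.

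Next I would compute the numerator and denominator directly. Clearly $\mathbf{1}^T\mathbf{1}=n$. For the numerator, expanding gives $\mathbf{1}^T \mathcal{AS}(G)\mathbf{1} = \sum_{i,j} w_{ij}$, and since $w_{ij}=\sqrt{(d_i+d_j-2)/(d_i+d_j)}$ if $v_i\sim v_j$ and $0$ otherwise, the symmetry of the summation over ordered pairs yields $\sum_{i,j} w_{ij}=2\sum_{v_i\sim v_j}\sqrt{(d_i+d_j-2)/(d_i+d_j)}=2\,\mathcal{ABS}(G)$. Substituting back produces exactly $\eta_1(G)\ge 2\mathcal{ABS}(G)/n$.

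For the equality characterization, the forward direction uses that equality in the Rayleigh quotient forces $\mathbf{1}$ to be an eigenvector of $\mathcal{AS}(G)$, which means every row sum of $\mathcal{AS}(G)$ is equal (to $\eta_1(G)$). By Lemma~\ref{eeeelemma}, this happens if and only if $G$ is regular. Conversely, if $G$ is $k$-regular, each row sum of $\mathcal{AS}(G)$ equals $k\sqrt{(k-1)/k}=\sqrt{k(k-1)}$, so $\mathbf{1}$ is an eigenvector, and since $\mathcal{AS}(G)$ is a nonnegative matrix its spectral radius cannot exceed its maximum row sum; hence $\eta_1(G)=\sqrt{k(k-1)}$. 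A direct computation using $2m=nk$ then shows $2\mathcal{ABS}(G)/n=\sqrt{k(k-1)}$, confirming equality.

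I do not expect a significant obstacle here: the bound is a one-line Rayleigh quotient application, and the lemma just proved handles the nontrivial direction of the equality case. The only point requiring minor care is verifying in the converse direction that the eigenvalue of $\mathbf{1}$ in the regular case is actually the \emph{largest} eigenvalue (not merely some eigenvalue), which follows from the nonnegativity of $\mathcal{AS}(G)$ via the row-sum bound on the spectral radius and does not require $G$ to be connected.
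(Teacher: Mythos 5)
Your proof is correct and follows essentially the same route as the paper: the Rayleigh quotient with the all-ones vector, evaluation of $\mathbf{1}^T\mathcal{AS}(G)\mathbf{1}=2\mathcal{ABS}(G)$, and Lemma~\ref{eeeelemma} to convert the constant-row-sum condition into regularity. Your explicit verification of the converse (that in the $k$-regular case $\sqrt{k(k-1)}$ is genuinely the largest eigenvalue via the row-sum bound for nonnegative matrices) is a detail the paper dismisses as straightforward, but it is a correct and welcome addition.
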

\begin{proof}
    Let $x=(x_1,x_2,\dots,x_n)^T$ be a vector in $\mathbb{R}^{n}$. Then\begin{center}
        $x^T\mathcal{AS}(G)x=2\sum\limits_{v_i\sim v_j}\sqrt{\dfrac{d_i+d_j-2}{d_i+d_j}}x_ix_j$.
    \end{center}  
Set $x=(1,1,\dots,1)^T$.  Then by Rayleigh's inequality, $\eta_1(G)\ge \dfrac{x^T\mathcal{AS}(G) x}{x^Tx}=\dfrac{2\mathcal{ABS}(G)}{n}$, and the equality holds if and only if $x=(1,1,\dots,1)^{T}$ is an eigenvector of $\mathcal{AS}(G)$ corresponding to the eigenvalue $\eta_1(G)$. Suppose $\eta_{1}(G)=\dfrac{2\mathcal{ABS}(G)}{n}$. Then the row sums of  $\mathcal{AS}(G)$ are equal. Therefore, by Lemma \ref{eeeelemma}, $G$ is a regular graph.
\end{proof}
Next, we provide a lower and upper bound for $\eta_1(G)$ in terms of order, size and the harmonic index of graph $G$.
\begin{theorem}\label{lbe1}
	Let $G$ be a graph of order n  and size m with no isolated vertices. Then \begin{equation}\label{e1}
	\sqrt{\frac{2\left(m-H(G)\right)}{n}}\le 	\eta_1(G)\le \sqrt{\frac{2(n-1)}{n}\left(m-H(G)\right)}
	\end{equation}
	with equality holds if and only if $n$ is even and $G\cong \dfrac{n}{2}K_2$.   
\end{theorem}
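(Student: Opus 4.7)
The plan is to exploit Proposition~\ref{ABS prop1}, which hands us the two identities $\sum_{i=1}^n \eta_i = 0$ and $\sum_{i=1}^n \eta_i^2 = 2(m-H(G))$, together with the Perron--Frobenius observation that $\mathcal{AS}(G)$ is non-negative and symmetric, so $\eta_1(G)$ is its spectral radius and satisfies $\eta_1^2 \ge \eta_i^2$ for every $i$. The lower bound then falls out immediately: summing $\eta_1^2 \ge \eta_i^2$ over all $i$ gives $n\eta_1^2 \ge \sum_{i=1}^n \eta_i^2 = 2(m-H(G))$, which rearranges to $\eta_1(G) \ge \sqrt{2(m-H(G))/n}$.

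For the upper bound, I would use the vanishing trace to write $\eta_1 = -\sum_{i=2}^n \eta_i$ and apply the Cauchy--Schwarz inequality:
\[
\eta_1^2 = \Bigl(\sum_{i=2}^n \eta_i\Bigr)^2 \le (n-1)\sum_{i=2}^n \eta_i^2 = (n-1)\bigl(2(m-H(G))-\eta_1^2\bigr).
\]
Solving for $\eta_1^2$ yields $n\eta_1^2 \le 2(n-1)(m-H(G))$, i.e.\ the claimed upper bound.

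The equality case is where the argument needs a little care, though not much. If equality holds throughout the two-sided bound \eqref{e1}, then in particular the two outer quantities coincide, forcing $2(m-H(G))/n = 2(n-1)(m-H(G))/n$ and hence $m = H(G)$ (as $n>1$). Rewriting this as $\sum_{v_i\sim v_j}\frac{d_i+d_j-2}{d_i+d_j}=0$ and noting that every summand is non-negative, every edge $v_iv_j$ must satisfy $d_i+d_j=2$, so both endpoints of every edge have degree one. Combined with the no-isolated-vertex hypothesis this means $G$ is a vertex-disjoint union of copies of $K_2$, so $n$ is even and $G\cong \frac{n}{2}K_2$. Conversely, for $G\cong\frac{n}{2}K_2$ the $\mathcal{ABS}$ matrix is the zero matrix, so $\eta_1=0$ and both sides of \eqref{e1} vanish.

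I expect no real obstacle: once one observes that the two outer bounds collapse to the single condition $m=H(G)$ when simultaneously tight, the degree-one conclusion is forced by non-negativity of each summand of $m-H(G)$. The only conceptual point worth flagging is that the two individual equalities can each be saturated by a broader class of graphs (for instance $K_n$ saturates the upper bound alone, via the Cauchy--Schwarz equality $\eta_2=\cdots=\eta_n$), but requiring joint equality in \eqref{e1} singles out precisely $\frac{n}{2}K_2$.
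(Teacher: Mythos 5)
Your derivation of the two bounds coincides with the paper's: the lower bound from $n\eta_1^2\ge\sum_{i=1}^n\eta_i^2=2(m-H(G))$, and the upper bound from the trace-zero identity plus Cauchy--Schwarz applied to $\eta_1=-\sum_{i=2}^n\eta_i$. Where you genuinely diverge is the equality analysis. You read ``equality'' as simultaneous equality on both sides of (\ref{e1}), which collapses to $m=H(G)$; non-negativity of each summand $\frac{d_i+d_j-2}{d_i+d_j}$ then forces every edge to join two degree-one vertices, giving $G\cong\frac{n}{2}K_2$. This is correct and self-contained, needing none of the paper's classification theorems. (One pedantic point: $\frac{2x}{n}=\frac{2(n-1)x}{n}$ forces $x=0$ only when $n\neq 2$; for $n=2$ the no-isolated-vertex hypothesis already gives $G\cong K_2$, so nothing is lost.) The paper instead tries to characterize the left and right equalities \emph{separately}: for the right equality it asserts $\eta_1^2=\eta_2^2=\dots=\eta_n^2$ ``from equation (\ref{e2})'' and then invokes Theorems \ref{one eig} and \ref{complete 1} to exclude $K_n$. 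That deduction is not sound --- equality in the Cauchy--Schwarz step only yields $\eta_2=\dots=\eta_n$, not $|\eta_1|=|\eta_2|$ --- and your closing remark that $K_n$ saturates the upper bound alone is exactly on target: for $K_n$ one has $\eta_1^2=(n-1)(n-2)$ and $\frac{2(n-1)}{n}\left(m-H(K_n)\right)=\frac{2(n-1)}{n}\cdot\frac{n(n-2)}{2}=(n-1)(n-2)$, so the ``right equality iff $\frac{n}{2}K_2$'' claim implicit in the paper's proof is false as stated. Your joint-equality reading is the one under which the theorem is true, and your proof of that version is the cleaner of the two.
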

\begin{proof}
	By Proposition~\ref{ABS prop1}, \begin{equation}\label{e2}
		n\eta_1^{2}\ge \sum\limits_{i=1}^n \eta_i^2=2(m-H(G)).
	\end{equation}
 Since $\sum\limits_{i=1}^n \eta_i=0$, $\eta_1^2=\left(\sum\limits_{i=2}^n \eta_i\right)^2 $. Therefore by Cauchy-Schwarz inequality,
   $\eta_1^2\le (n-1)\sum\limits_{i=2}^n \eta_i^2$ and the equality holds if and only if $\eta_{2}=\eta_{3}=\cdots=\eta_{n}$.
 So,\begin{align}\label{en3} n\eta_1^2 &\le 2(n-1)\left(m-H(G)\right).\end{align}
 Thus from equations (\ref{e2}) and (\ref{en3}) we get the desired inequality.  
	Suppose $n$ is even and $G\cong \dfrac{n}{2}K_2$. Then $\mathcal{AS}(G)$ is the null matrix, and so $\eta_1=\eta_2=\dots=\eta_n=0$ and $H(G)=m$. Thus the equalities in~(\ref{e1}) holds.
	Conversely, suppose the right equality holds. Then  from equation~(\ref{e2}), $\eta_1^2=\eta_2^2=\dots=\eta_n^2$. This implies that $G$ has at most two distinct $\mathcal{ABS}$ eigenvalue. Therefore, by Theorems \ref{one eig} and \ref{complete 1}, either $G\cong K_n$ $(n>2)$ or $n$ is even and $G\cong \dfrac{n}{2}K_2$. If $G\cong K_n$, then $\eta_1^2\neq\eta_2^2$, a contradiction. Thus, $G\cong \dfrac{n}{2}K_2$ for an even integer $n$. Similarly, the left equality holds if and only if $G\cong \dfrac{n}{2}K_2$ for an even integer $n$.
\end{proof}
\begin{theorem}\label{newthm}
    Let $G$ be a graph of order $n$ with minimum degree $\delta$ and maximum degree $\Delta$. Then $\sqrt{\delta(\delta-1)}\le \eta_{1}(G)\le \sqrt{\Delta(\Delta-1)}$. Further, equality holds if and only if $G$ is a regular graph.
\end{theorem}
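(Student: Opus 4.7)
The plan is to exploit the tight pointwise relationship between $\mathcal{AS}(G)$ and the adjacency matrix $A(G)$. Since every nonzero entry of $\mathcal{AS}(G)$ has the form $\sqrt{1-2/(d_{i}+d_{j})}$ with $2\delta \le d_{i}+d_{j} \le 2\Delta$, we get the entrywise sandwich
\[
\sqrt{\tfrac{\delta-1}{\delta}}\,A(G) \;\preceq\; \mathcal{AS}(G) \;\preceq\; \sqrt{\tfrac{\Delta-1}{\Delta}}\,A(G),
\]
where the left-hand side is read as the zero matrix when $\delta\le 1$ (in which case the claimed lower bound $\sqrt{\delta(\delta-1)}=0$ is trivially valid). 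Applying Lemma~\ref{b_1b_2} to these non-negative symmetric matrices then yields
\[
\sqrt{\tfrac{\delta-1}{\delta}}\,\lambda_{1}(A(G)) \;\le\; \eta_{1}(G) \;\le\; \sqrt{\tfrac{\Delta-1}{\Delta}}\,\lambda_{1}(A(G)).
\]

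To finish the two inequalities I would invoke the classical chain $\delta \le 2m/n \le \lambda_{1}(A(G)) \le \Delta$: the first step follows from $2m=\sum d_{i}\ge n\delta$, the middle step from the Rayleigh quotient with the all-ones vector, and the last step is the standard Perron--Frobenius bound for a non-negative symmetric matrix whose row sums are at most $\Delta$. Multiplying the left/right ends of this chain by the respective coefficients produces the desired $\sqrt{\delta(\delta-1)} \le \eta_{1}(G) \le \sqrt{\Delta(\Delta-1)}$.

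For the equality characterization, the forward direction is immediate: a $k$-regular graph satisfies $\mathcal{AS}(G) = \sqrt{(k-1)/k}\,A(G)$ with $\lambda_{1}(A(G))=k$, giving $\eta_{1}(G) = \sqrt{k(k-1)}$ and matching both bounds since $\delta=\Delta=k$. Conversely, simultaneous equality in the two bounds forces $\sqrt{\delta(\delta-1)} = \sqrt{\Delta(\Delta-1)}$, hence $\delta=\Delta$, so $G$ is regular. The main obstacle I anticipate is the bookkeeping when only one of the two bounds is tight: there one has to trace equality separately through Lemma~\ref{b_1b_2} (appealing to its irreducibility clause on a connected component $H$ that attains $\eta_{1}(G)$, which forces $\mathcal{AS}(H)$ to equal the scalar multiple of $A(H)$ on $H$, so every edge of $H$ has $d_{i}+d_{j}$ at the extremal value) and through tightness of $\lambda_{1}(A(H))=\Delta$ or $\lambda_{1}(A(H))=\delta$ (which makes $H$ regular at that common degree). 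Reconciling these facts across the remaining components of $G$ is the delicate step in pinning down regularity.
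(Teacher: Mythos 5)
Your proof of the two inequalities is correct but follows a genuinely different route from the paper. The paper works entirely inside the $\mathcal{AS}$-world: it invokes the row-sum bounds $\min_i R_i\le \eta_1(G)\le \max_i R_i$ for a non-negative matrix (Horn--Johnson, Theorem 8.1.22), bounds each row sum of $\mathcal{AS}(G)$ between $\sqrt{\delta(\delta-1)}$ and $\sqrt{\Delta(\Delta-1)}$ directly, and settles equality via Lemma \ref{eeeelemma} (equal row sums if and only if $G$ is regular). You instead sandwich $\mathcal{AS}(G)$ entrywise between $\sqrt{(\delta-1)/\delta}\,A(G)$ and $\sqrt{(\Delta-1)/\Delta}\,A(G)$, apply Lemma \ref{b_1b_2}, and finish with the classical chain $\delta\le 2m/n\le \lambda_1(A(G))\le \Delta$. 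Both are sound; your version delegates the degree bookkeeping to standard adjacency-spectral facts, while the paper's is self-contained and reuses the same row-sum mechanism that drives Lemma \ref{eeeelemma}.

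On the equality case you are worrying more than necessary. The paper only establishes the version in which \emph{both} bounds are tight simultaneously, and your one-line argument $\sqrt{\delta(\delta-1)}=\sqrt{\Delta(\Delta-1)}\Rightarrow\delta=\Delta$ already covers that (up to the degenerate case $\delta\le 1$, $\Delta=1$, e.g.\ $K_2\cup K_1$, which is a defect of the paper's statement as well). As for the one-sided version you sketch: it is simply false for disconnected graphs --- $K_3\cup K_2$ has $\Delta=2$ and $\eta_1=\sqrt{2}=\sqrt{\Delta(\Delta-1)}$ without being regular --- so the ``reconciling across components'' you anticipate cannot succeed in general. Restricted to connected graphs, your chain closes the one-sided case with no delicate tracing at all: $\eta_1(G)=\sqrt{\Delta(\Delta-1)}$ forces $\lambda_1(A(G))=\Delta$, which classically forces $\Delta$-regularity, while $\eta_1(G)=\sqrt{\delta(\delta-1)}$ (with $\delta\ge 2$) forces $\lambda_1(A(G))\le\delta$, hence $\lambda_1(A(G))=2m/n=\delta$ and again regularity; the irreducibility clause of Lemma \ref{b_1b_2} is never needed.
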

\begin{proof}
From \cite[Theorem 8.1.22]{horn2012matrix}, $\displaystyle\min_{1\le i\le n}\{R_i\}\le \eta_{1}(G)\le\max_{1\le i\le n}\left\{R_i\right\}$, where $R_i$ is the row sum of the $i$th row of $\mathcal{AS}(G)$. Moreover, the equality on both sides holds if and only if all the row sums of $\mathcal{AS}(G)$ are equal. Now, 
\begin{eqnarray*}\label{eeeee1}
\max_{1\le i\le n}\left\{R_i\right\}=\max_{1\le i\le n } \sum\limits_{v_i:v_i\sim v_j}\sqrt{1-\frac{2}{d_i+d_j}}&\le \sqrt{\Delta(\Delta-1)}, 
\end{eqnarray*} 
where the equality holds if and only if one of the components of $G$ is a $\Delta$-regular graph,
and
\begin{eqnarray*}\label{eeeee2}
\min_{1\le i\le n}\left\{R_i\right\}=	\min_{1\le i\le n } \sum\limits_{v_{i}:v_i\sim v_j}\sqrt{1-\frac{2}{d_i+d_j}}&\ge \sqrt{\delta(\delta-1)},
\end{eqnarray*} 
where the equality holds if and only if one of the components of $G$ is a $\delta$-regular graph. Now, by Lemma \ref{eeeelemma}, the row sums of $\mathcal{AS}(G)$ is a constant if and only if $G$ is regular. Thus, 
$\sqrt{\delta(\delta-1)}\le \eta_{1}(G)\le\sqrt{\Delta(\Delta-1)}$ and the equality on both sides holds if and only if $G$ is a regular graph.
\end{proof}
The sum connectivity matrix of a graph $G$ is a general extended adjacency matrix with $\mathcal{F}(d_{i},d_{j})=\dfrac{1}{\sqrt{d_{i}+d_{j}}}$. It is denoted by $\mathcal{S}(G)$. In the following theorem, we give a relation between the spectral radius of $\mathcal{S}(G)$ ($\rho(\mathcal{S}(G))$) and $\eta_{1}(G)$.
\begin{theorem}\label{sumrand 1}
	If $G$ is a connected graph of order $n\ge2$ , then \begin{center}
		$\displaystyle\rho(\mathcal{S}(G))~\min_{\substack{\\v_i\sim v_j}}\sqrt{d_i+d_j-2}\le \eta_1(G)\le \rho(\mathcal{S}(G))\max_{\substack{\\v_i\sim v_j}}\sqrt{d_i+d_j-2}$. 
	\end{center} Further, equality on both sides holds if and only if $G$ is a regular graph or semiregular graph.
\end{theorem}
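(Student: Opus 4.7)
The plan is to compare $\mathcal{AS}(G)$ entrywise with a scalar multiple of $\mathcal{S}(G)$ and then invoke the Perron--Frobenius comparison Lemma~\ref{b_1b_2}. Writing $w_{ij}=\sqrt{d_i+d_j-2}\,s_{ij}$, and setting $m=\min_{v_i\sim v_j}\sqrt{d_i+d_j-2}$ and $M=\max_{v_i\sim v_j}\sqrt{d_i+d_j-2}$, I would first verify the entrywise chain $m\,\mathcal{S}(G)\preceq\mathcal{AS}(G)\preceq M\,\mathcal{S}(G)$. On the zero entries both sides vanish (the two matrices share the nonzero pattern of $A(G)$), and on the nonzero entries the inequalities are immediate from the definitions of $m$ and $M$.

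Since $G$ is connected with $n\ge 2$, the matrices $\mathcal{S}(G)$ and $\mathcal{AS}(G)$ inherit the zero--nonzero pattern of $A(G)$ and are therefore irreducible non-negative matrices. Applying Lemma~\ref{b_1b_2} to each of the two comparisons then gives $m\,\rho(\mathcal{S}(G))\le\eta_1(G)\le M\,\rho(\mathcal{S}(G))$, the claimed double inequality.

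For the equality analysis I would use the strict part of Lemma~\ref{b_1b_2}. Equality in the upper bound forces $\mathcal{AS}(G)=M\,\mathcal{S}(G)$, which is equivalent to $d_i+d_j=M^2+2$ for every edge $v_iv_j$; equality in the lower bound yields the same conclusion via $m$. So the real content reduces to the following structural claim, which I expect to be the only nontrivial step: \emph{if $d_i+d_j$ is constant on the edges of a connected graph $G$, then $G$ is regular or bipartite semiregular}. I would settle this by a BFS argument from a fixed vertex $x$ with $d(x)=a$, letting $c$ denote the common edge-sum: every neighbor of $x$ has degree $c-a$, every vertex at distance $2$ from $x$ has degree $a$, and in general degrees alternate as $a,\,c-a,\,a,\ldots$ along any walk. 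If $a=c-a$ then $G$ is $a$-regular; otherwise no vertex can lie on both an even-length and an odd-length walk from $x$ (it would be forced to have two different degrees), so $G$ is bipartite and each side is degree-homogeneous, i.e., semiregular.

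The converse is then straightforward: whenever $d_i+d_j$ is constant across edges (equal to $2r$ for an $r$-regular graph, or to $r_1+r_2$ for an $(r_1,r_2)$-semiregular graph), we have $m=M=\sqrt{d_i+d_j-2}$ and $\mathcal{AS}(G)=M\,\mathcal{S}(G)$, so both inequalities collapse to equalities.
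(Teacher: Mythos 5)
Your proposal is correct and follows essentially the same route as the paper: the entrywise comparison $m\,\mathcal{S}(G)\preceq\mathcal{AS}(G)\preceq M\,\mathcal{S}(G)$ combined with Lemma~\ref{b_1b_2}, followed by the reduction of the equality case to the statement that a constant edge degree-sum forces $G$ to be regular or semiregular. Your alternating-walk/BFS justification of that last structural step is a slightly cleaner and more explicit version of the paper's argument (which reaches the same conclusion by producing a disconnecting component from a vertex of intermediate degree), but the overall proof is the same.
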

\begin{proof}
	The matrices $\mathcal{AS}(G)$ and $\mathcal{S}(G)$ are non-negative and irreducible. Moreover, \begin{equation*}
		\mathcal{S}(G)\min_{\\\substack{v_i\sim v_j}}\sqrt{d_i+d_j-2}\preceq \mathcal{AS}(G)\preceq \mathcal{S}(G)\max_{\\\substack{v_i\sim v_j}}\sqrt{d_i+d_j-2}.
	\end{equation*}
	Thus, by Lemma \ref{b_1b_2},\begin{equation*}
		\rho(\mathcal{S}(G))\min_{\\\substack{v_i\sim v_j}}\sqrt{d_i+d_j-2}\le \eta_{1}(G)\le \rho(\mathcal{S}(G))\max_{\\\substack{v_i\sim v_j}}\sqrt{d_i+d_j-2}.
	\end{equation*}
Now we consider the equality case. Suppose $\displaystyle\eta_1(G)=\rho(\mathcal{S}(G))\max_{v_i\sim v_j}\sqrt{d_i+d_j-2}$. Then by Lemma \ref{b_1b_2}, $\displaystyle\mathcal{AS}(G)=\mathcal{S}(G)\max_{v_i\sim v_j}\sqrt{d_i+d_j-2}$. This implies that, for every edge $v_iv_j$ in $G$, $\displaystyle\sqrt{d_i+d_j-2}=\max_{v_i\sim v_j}\sqrt{d_i+d_j-2}$. Therefore, $d_{i}+d_{j}$ is constant for any edge $v_{i}v_{j}$ of $G$.  Let $u$ (resp. $v$) be a vertex in $G$ with maximum degree $\Delta$  (resp. minimum degree $\delta$). Let $u\sim u_{1}$ and $v\sim v_{1}$. Then $d(u)+d(u_{1})\ge \Delta+\delta \ge d(v)+d(v_{1})$. Hence, $d_i+d_j=\delta+\Delta$, for all $v_iv_j\in E(G)$. Now, suppose there exists a vertex $w$ in $G$ such that $d(w)\in \left(\delta,~ \Delta\right)$. Then $G$ has a component whose vertex degrees are either $d(w)$ or $\Delta+\delta-d(w)$. Therefore, $G$ is disconnected,  a contradiction.  Thus, $G$ is $\Delta$-regular or $(\delta,\, \Delta)$-semiregular graph. Similarly, if $\displaystyle\eta_1(G)=\rho(\mathcal{S}(G))\min_{v_i\sim v_j}\sqrt{d_i+d_j-2}$, then $G$ is $\Delta$-regular or $(\delta,\, \Delta)$-semiregular graph. The converse part is straightforward.
\end{proof}

\begin{theorem}
    If $G$ is a connected graph with maximum degree $\Delta$ and minimum degree $ \delta$, then $\dfrac{2\chi(G)}{n}\sqrt{2\delta-2}\le \eta_{1}(G)\le\sqrt{\dfrac{2(\Delta-1)(n-1)}{n}~R(G)}$, where the equality on the left side holds only if $G$ is regular and the equality on the right side holds only if $G$ is a complete graph.
\end{theorem}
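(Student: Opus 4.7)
The plan is to chain the bounds we already established in Theorem~\ref{lbe2} and Theorem~\ref{lbe1} with elementary pointwise estimates on the edge-weights $\sqrt{(d_i+d_j-2)/(d_i+d_j)}$. The unknowns $\chi(G)$ and $R(G)$ are edge-sums, so both directions reduce to bounding a single edge contribution and then summing.

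For the lower bound, I start from Theorem~\ref{lbe2}, which gives $\eta_1(G)\ge \frac{2\mathcal{ABS}(G)}{n}$. Since $d_i,d_j\ge\delta$ on every edge, $d_i+d_j-2\ge 2\delta-2$, so
\begin{equation*}
\mathcal{ABS}(G)=\sum_{v_i\sim v_j}\frac{\sqrt{d_i+d_j-2}}{\sqrt{d_i+d_j}}\ge \sqrt{2\delta-2}\sum_{v_i\sim v_j}\frac{1}{\sqrt{d_i+d_j}}=\sqrt{2\delta-2}\,\chi(G),
\end{equation*}
which when substituted yields $\eta_1(G)\ge \frac{2\chi(G)}{n}\sqrt{2\delta-2}$. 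The equality assertion follows because the very first step (Theorem~\ref{lbe2}) already demands $G$ be regular; the pointwise inequality then also holds with equality for a regular graph, so the ``only if'' direction is exactly what Theorem~\ref{lbe2} gives us.

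For the upper bound, I use Theorem~\ref{lbe1} to write $\eta_1(G)\le\sqrt{\frac{2(n-1)}{n}(m-H(G))}$ and observe that
\begin{equation*}
m-H(G)=\sum_{v_i\sim v_j}\left(1-\frac{2}{d_i+d_j}\right)=\sum_{v_i\sim v_j}\frac{d_i+d_j-2}{d_i+d_j}.
\end{equation*}
The core pointwise estimate is $d_i+d_j-2\le 2(\Delta-1)$ combined with the AM--GM bound $d_i+d_j\ge 2\sqrt{d_id_j}$, which together give
\begin{equation*}
\frac{d_i+d_j-2}{d_i+d_j}\le\frac{2(\Delta-1)}{2\sqrt{d_id_j}}=\frac{\Delta-1}{\sqrt{d_id_j}}.
\end{equation*}
Summing over edges yields $m-H(G)\le(\Delta-1)R(G)$, and substituting back produces the claimed upper bound.

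The only delicate point is the equality analysis on the right, since the stated equality case of Theorem~\ref{lbe1} excludes connected graphs of order $\ge 3$. The cleanest way around this is to re-examine the Cauchy--Schwarz step inside the proof of Theorem~\ref{lbe1}: equality in $\eta_1\le\sqrt{\frac{2(n-1)}{n}(m-H(G))}$ forces $\eta_2=\eta_3=\cdots=\eta_n$, so $\mathcal{AS}(G)$ has at most two distinct eigenvalues. For a connected graph of order $n>2$, Theorem~\ref{complete 1} then forces $G\cong K_n$, while for $n=2$ the graph $K_2$ is itself complete. Equality in the pointwise step additionally demands $d_i=d_j=\Delta$ on every edge, which is automatic for $K_n$. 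Thus equality on the right implies $G\cong K_n$, which is the ``only if'' direction we need. I expect the equality analysis of the upper bound to be the main subtlety, since it requires invoking Theorem~\ref{complete 1} to identify the extremal graph rather than simply quoting Theorem~\ref{lbe1}.
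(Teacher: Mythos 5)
Your proof is correct, but it follows a genuinely different route from the paper. The paper obtains both bounds by passing through the sum-connectivity matrix: it quotes Zhou--Trinajsti\'c's bounds $\frac{2\chi(G)}{n}\le \rho(\mathcal{S}(G))\le \sqrt{\frac{n-1}{n}R(G)}$ and then sandwiches $\eta_1(G)$ between $\rho(\mathcal{S}(G))\min_{v_i\sim v_j}\sqrt{d_i+d_j-2}$ and $\rho(\mathcal{S}(G))\max_{v_i\sim v_j}\sqrt{d_i+d_j-2}$ using Theorem~\ref{sumrand 1} (the paper's proof actually cites Theorem~\ref{newthm}, which appears to be a misprint), estimating the min and max by $\sqrt{2\delta-2}$ and $\sqrt{2\Delta-2}$. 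You instead chain Theorem~\ref{lbe2} with the pointwise estimate $\mathcal{ABS}(G)\ge\sqrt{2\delta-2}\,\chi(G)$ for the lower bound, and Theorem~\ref{lbe1} with $\frac{d_i+d_j-2}{d_i+d_j}\le\frac{\Delta-1}{\sqrt{d_id_j}}$ (via AM--GM) for the upper bound. The paper's route is shorter because it outsources the work to a known result on $\rho(\mathcal{S}(G))$; yours is self-contained within the paper's own toolkit, and your equality analysis is arguably cleaner: on the left, Theorem~\ref{lbe2}'s equality condition gives regularity immediately, and on the right you correctly observe that Theorem~\ref{lbe1}'s stated equality characterization cannot simply be quoted for connected graphs, so you re-derive the Cauchy--Schwarz equality condition $\eta_2=\cdots=\eta_n$ and invoke Theorem~\ref{complete 1} to land on $K_n$ --- a point the paper's one-line proof glosses over entirely. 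Both arguments are valid; yours trades brevity for transparency and internal consistency.
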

\begin{proof}
From \cite[Corollary 1]{zhou2010sum}, we have
\begin{equation}\label{neweq2}\dfrac{2\chi(G)}{n}\le \rho(\mathcal{S}(G))\le \sqrt{\dfrac{n-1}{n}R(G)},\end{equation}
where the left side equality holds only if  $\mathcal{S}(G)$ has equal row sums, and the right equality holds only if $G$ is a complete graph.   Therefore by Theorem \ref{newthm}, we get the desired result.	
\end{proof}
Now, we obtain an upper bound for the spectral radius of unicyclic graphs using the following lemma.
\begin{lemma}\cite{brouwer2011spectra}\label{PBT}
	Let $P\ge 0$ be an irreducible matrix with an eigenvalue $\theta$.If $PY\le tY$ for $t\in \mathbb{R}, Y\in \mathbb{R}^n$ and $Y\ge 0$, then $t\ge \theta$.
\end{lemma}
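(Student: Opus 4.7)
The plan is to derive this as a standard consequence of the Perron--Frobenius theorem. Since $P$ is a non-negative irreducible matrix, Perron--Frobenius guarantees the existence of a strictly positive left eigenvector associated with the spectral radius of $P$; here $\theta$ must be interpreted as this Perron root (the only real eigenvalue that admits a meaningful comparison with the scalar $t$). Accordingly, I would begin by selecting $X\in\mathbb{R}^{n}$ with $X>0$ entrywise such that $X^{T}P=\theta X^{T}$.

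Next I would multiply the hypothesis $PY\le tY$ on the left by $X^{T}$. Because each entry of $X^{T}$ is strictly positive, componentwise inequalities on vectors are preserved after contracting against $X^{T}$, yielding
\[
X^{T}PY \;\le\; t\, X^{T}Y.
\]
Substituting $X^{T}P=\theta X^{T}$ into the left-hand side gives $\theta\, X^{T}Y\le t\, X^{T}Y$, which rearranges to
\[
(t-\theta)\, X^{T}Y \;\ge\; 0.
\]

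To finish, I would observe that $Y\ge 0$ together with $Y\neq 0$ (the case $Y=0$ is vacuous and contributes nothing to applications) and $X>0$ force the scalar $X^{T}Y$ to be strictly positive. Dividing the displayed inequality by $X^{T}Y$ then yields $t\ge \theta$, as required.

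The computation itself is a one-line pairing trick; the genuine subtlety---and the main obstacle---is the interpretive step of recognising that $\theta$ must be the Perron eigenvalue of $P$ (for an arbitrary eigenvalue the conclusion fails, e.g.\ complex eigenvalues or real eigenvalues strictly below the spectral radius), and that $Y$ should be nontrivially non-negative so that $X^{T}Y>0$. Once these points are clarified, irreducibility of $P$ immediately supplies the positive left Perron eigenvector that closes the argument.
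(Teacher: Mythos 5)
The paper does not prove this lemma; it is quoted from Brouwer and Haemers without argument, so there is nothing in the text to compare against. Your proof is the standard one and is correct: irreducibility of $P\ge 0$ supplies a strictly positive left Perron eigenvector $X$ with $X^{T}P=\theta X^{T}$, pairing the hypothesis $PY\le tY$ against $X^{T}$ gives $\theta\,X^{T}Y\le t\,X^{T}Y$, and $X^{T}Y>0$ once $Y\ge 0$ is nonzero, so $t\ge\theta$. You are also right to flag the two implicit hypotheses --- that $\theta$ is to be read as the Perron root $\rho(P)$ and that $Y\neq 0$ (for $Y=0$ the hypothesis holds vacuously and the conclusion can fail) --- and both are satisfied in the paper's application, where $Y=(\sqrt{d_1},\dots,\sqrt{d_n})^{T}$ is strictly positive. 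One small quibble with a side remark only: the conclusion $t\ge\theta$ does not in fact fail for a real eigenvalue $\theta$ strictly below the spectral radius; it holds a fortiori, since $t\ge\rho(P)\ge\theta$. What fails in that case is merely your particular method, because a non-Perron eigenvalue need not admit a positive left eigenvector. This does not affect the validity of your proof for the intended reading.
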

\begin{theorem}
	Let $G$ be a unicyclic graph of order $n$ and girth at least 5. Then $\eta_1(G)\le \sqrt{\dfrac{(n-3)(n+1)}{n-1}}$.
\end{theorem}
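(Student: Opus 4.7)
My plan is to apply Lemma~\ref{PBT} to $\mathcal{AS}(G)$, which is non-negative and, because a unicyclic graph is connected, irreducible, so its Perron eigenvalue is $\eta_1(G)$. I would take the strictly positive test vector $Y=(\sqrt{d_1},\sqrt{d_2},\ldots,\sqrt{d_n})^{T}$; this is the natural guess since $\sqrt{d_i}$ pairs cleanly with the square-root entries of $\mathcal{AS}(G)$ via Cauchy--Schwarz.

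The $i$-th component is $(\mathcal{AS}(G)Y)_i=\sum_{j\sim i}w_{ij}\sqrt{d_j}$ with $w_{ij}^{2}=1-\tfrac{2}{d_i+d_j}$. Applying Cauchy--Schwarz across the $d_i$ neighbours of $i$ yields
\[
\bigl((\mathcal{AS}(G)Y)_i\bigr)^{2}\;\le\;d_i\sum_{j\sim i}w_{ij}^{2}\,d_j.
\]
I would then exploit the girth hypothesis twice. Triangle-freeness forces $N(i)\cap N(j)=\emptyset$ for every edge $ij$, so $|N(i)\cup N(j)|=d_i+d_j\le n$ and hence $w_{ij}^{2}\le (n-2)/n$. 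The absence of $4$-cycles makes the vertices within graph-distance $2$ of $i$ pairwise distinct, and a count gives $1+\sum_{j\sim i}d_j\le n$, i.e.\ $\sum_{j\sim i}d_j\le n-1$. Combining, $\sum_{j\sim i}w_{ij}^{2}d_j\le (n-1)(n-2)/n$, so $(\mathcal{AS}(G)Y)_i\le Y_i\sqrt{(n-1)(n-2)/n}$ for every $i$, and Lemma~\ref{PBT} delivers $\eta_1(G)\le \sqrt{(n-1)(n-2)/n}$.

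The last step is the purely algebraic comparison $\tfrac{(n-1)(n-2)}{n}\le \tfrac{(n-3)(n+1)}{n-1}$, which after clearing denominators reduces to $n^{2}-4n+1\ge 0$, true for every $n\ge 4$ and in particular for any $n$ admitted by the girth hypothesis. I expect the main obstacle to be the second combinatorial bound $\sum_{j\sim i}d_j\le n-1$: one must argue carefully that the depth-$2$ breadth-first tree rooted at $i$ injects into $V(G)$, invoking the no-triangle condition (so $N(j)\setminus\{i\}$ is disjoint from $N(i)$) and the no-$4$-cycle condition (so the sets $N(j)\setminus\{i\}$ are pairwise disjoint as $j$ ranges over the neighbours of $i$).
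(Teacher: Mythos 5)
Your proposal is correct, and it follows the same overall framework as the paper — both apply Lemma~\ref{PBT} to $\mathcal{AS}(G)$ with the test vector $Y=(\sqrt{d_1},\dots,\sqrt{d_n})^T$ and a Cauchy--Schwarz step over the neighbours of each vertex — but the combinatorial inputs are genuinely different. The paper uses the unicyclic identity $\sum_i d_i=2n$ to get $\sum_{j\sim i}d_j\le n+1$, and uses the three extra cycle vertices guaranteed by girth $\ge 5$ to get $d_i+d_j\le n-1$ on every edge; it then pulls the uniform factor $\sqrt{1-\tfrac{2}{n-1}}$ out of the sum before applying Cauchy--Schwarz, landing exactly on $\sqrt{(n-3)(n+1)/(n-1)}$. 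You instead keep $w_{ij}^2$ inside the Cauchy--Schwarz sum and derive both estimates purely locally from the girth condition: triangle-freeness gives $d_i+d_j\le n$ (hence $w_{ij}^2\le (n-2)/n$), and the injectivity of the depth-$2$ neighbourhood (no triangles, no $4$-cycles) gives $1+\sum_{j\sim i}d_j\le n$. Your argument never uses unicyclicity, so it proves the stronger and more general statement that $\eta_1(G)\le\sqrt{(n-1)(n-2)/n}$ for \emph{every} graph of girth at least $5$, and your closing algebraic comparison $(n-1)^2(n-2)\le n(n-3)(n+1)\iff n^2-4n+1\ge 0$ (valid since $n\ge 5$ here) correctly recovers the theorem's bound. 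The only points to make explicit are that $Y>0$ and $\mathcal{AS}(G)$ is irreducible because a unicyclic graph is connected with no isolated vertices — which you do note — and the disjointness bookkeeping in the depth-$2$ count, which you also flag and which goes through exactly as you describe.
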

\begin{proof}
	Since $G$ is unicyclic of order $n$,  $\sum\limits_{i=1}^n d_i=2n$.\\ Therefore, \begin{align*}\sum\limits_{v_j:v_i\sim v_j} d_j&=2n-d_i-\sum\limits_{v_j:v_j\not\sim v_i}  d_j\le 2n-d_i-\sum\limits_{v_j:v_j\not\sim v_i} 1 =n+1.\end{align*} 
	That is, $\sum\limits_{v_j:v_i\sim v_j} d_j\le n+1$. Also, $\displaystyle d_{i}+d_{j}=2n-\sum_{\underset{k\neq i, j}{k=1}}^{n}d_{k}$. Since $G$ is of girth at least 5, there exist at least 3 vertices $v_p,v_q$ and $v_r$ on the cycle which are distinct from the vertices $v_{i}$ and $v_{j}$. Therefore, $d_i+d_j\le 2n-6-\sum\limits_{k\notin \left\{i,j,p,q,r\right\}}d_k\le n-1$. That is, $d_i+d_j\le n-1$. 
	Let $\Delta_a=max\left\{d_i+d_j: v_iv_j\in E(G)\right\}$,  $Y=\left\{\sqrt{d_1},\sqrt{d_2},\dots,\sqrt{d_n}\right\}$ and $\left(\mathcal{AS}Y\right)_i$ denote the $i^{th}$ row sum of the matrix $(\mathcal{AS})Y$. 
	Then $\Delta_{a}\le n-1$, and so 
	\begin{center}
		$\left(\mathcal{AS}Y\right)_i=\sum\limits_{v_{j}:v_{i}\sim v_{j}}\sqrt{1-\dfrac{2}{d_i+d_j}}\sqrt{d_j}\le \sqrt{1-\dfrac{2}{n-1}}\sum\limits_{v_{j}:v_{i}\sim v_{j}}\sqrt{d_j}.$\end{center}
	Now by Cauchy-Schwarz inequality, \begin{center}
		$\sum\limits_{v_{j}:v_{i}\sim v_{j}}\sqrt{d_j}\le \sqrt{\sum\limits_{v_{j}:v_{i}\sim v_{j} }d_j}\sqrt{d_i}\le \sqrt{n+1}\sqrt{d_i}$.\end{center}
	Therefore, $\left(\mathcal{AS}Y\right)_i\le \sqrt{\dfrac{(n-3)(n+1)}{n-1}}\sqrt{d_i}$, and thus
	$\mathcal{AS}Y\le \sqrt{\dfrac{(n-3)(n+1)}{n-1}}~ Y.$
	Hence, by Lemma~\ref{PBT}, $\eta_1(G)\le \sqrt{\dfrac{(n-3)(n+1)}{n-1}}$.
\end{proof}
\section{ Properties of Atom-bond sum-connectivity energy}\label{sec5}
In this section, we present some bounds on $\mathcal{E_{ABS}}(G)$.
\begin{theorem}
	Let $G$ be a graph with $n \ge 2$ vertices and $m$ edges. Then \begin{enumerate}[(i)]
	\item
	 $\mathcal{E}_{\mathcal{ABS}}(G)\ge2\sqrt{m-H(G)}$. Equality holds if and only if  $G\cong pK_{n_{1},n_{2}}\cup qK_{2}\cup rK_{1}$, where $n_{1}+n_{2}>2$,  $p=0$ or $1$ and $p(n_{1}+n_{2})+2q+r=n$.
	 \item
	 $\mathcal{E}_{\mathcal{ABS}}(G)\le\sqrt{2n(m-H(G))}$. Equality holds if and only if $G\cong pK_{2}\cup qk_{1}$, where $2p+q=n$.
	 \end{enumerate}
\end{theorem}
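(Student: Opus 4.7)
For part (i), I would start from the identities in Proposition \ref{ABS prop1}. Let $P=\sum_{\eta_i>0}\eta_i$ and $N=\sum_{\eta_i<0}(-\eta_i)$. Since $\sum_i\eta_i=0$, one has $P=N$ and $\mathcal{E}_{\mathcal{ABS}}(G)=P+N=2P$. Because $P$ is a sum of non-negative terms, $P^{2}\ge\sum_{\eta_i>0}\eta_i^{2}$, and similarly $N^{2}\ge\sum_{\eta_i<0}\eta_i^{2}$; adding these gives $2P^{2}\ge\sum_i\eta_i^{2}=2(m-H(G))$, and hence $\mathcal{E}_{\mathcal{ABS}}(G)=2P\ge 2\sqrt{m-H(G)}$.

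To analyze equality in (i), I observe that equality forces at most one strictly positive eigenvalue and at most one strictly negative eigenvalue (otherwise $P^{2}>\sum_{\eta_i>0}\eta_i^{2}$ strictly). Thus the spectrum is contained in $\{\alpha,0,-\alpha\}$ with each nonzero value of multiplicity at most one. If $\alpha=0$, Theorem \ref{one eig} yields $G\cong pK_2\cup qK_1$ (the $p=0$ situation). If $\alpha>0$, let $H$ be a component of $G$ with nonzero eigenvalues; by Perron--Frobenius any such component contributes a simple positive eigenvalue, so there is exactly one such $H$, and its spectrum is $\{\alpha,0,\ldots,0,-\alpha\}$. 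Corollary \ref{bipartite} shows $H$ is bipartite, and Theorem \ref{cbipartite} then forces $H\cong K_{n_1,n_2}$ with $n_1+n_2>2$. The remaining components have all-zero spectrum, so by Theorem \ref{one eig} they are copies of $K_2$ and $K_1$, matching the claimed form with $p=1$.

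For part (ii), the Cauchy--Schwarz inequality applied to the sequences $(|\eta_i|)$ and $(1,\ldots,1)$ gives $\mathcal{E}_{\mathcal{ABS}}(G)^{2}\le n\sum_i\eta_i^{2}=2n(m-H(G))$. Equality requires $|\eta_1|=\cdots=|\eta_n|=\alpha$ for some $\alpha\ge 0$. The main hurdle is to rule out $\alpha>0$. If $\alpha>0$, then $\mathcal{AS}(G)^{2}=\alpha^{2}I$, so for each $i\ne j$ the entry $(\mathcal{AS}(G)^{2})_{ij}=\sum_k w_{ik}w_{kj}$, a sum of non-negative terms, must vanish. Since $w_{ij}=0$ only when $d_i=d_j=1$, any connected component of $G$ with at least three vertices has strictly positive weights on every edge and contains a vertex $v$ of degree $\ge 2$; two distinct neighbors of $v$ then share $v$ as a common neighbor through positive-weight edges, forcing a strictly positive off-diagonal entry in $\mathcal{AS}(G)^{2}$ and contradicting $\mathcal{AS}(G)^{2}=\alpha^{2}I$. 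Thus every component has order at most $2$, so $\mathcal{AS}(G)=0$ and $\alpha=0$; Theorem \ref{one eig} then gives $G\cong pK_2\cup qK_1$.

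The main technical obstacle I anticipate is the equality analysis in (i): correctly enumerating the (possibly disconnected) graphs whose $\mathcal{ABS}$-spectrum consists of at most one positive value, at most one negative value, and zeros. This requires combining Perron--Frobenius with Theorems \ref{one eig} and \ref{cbipartite} to pin down the unique nontrivial component as a complete bipartite graph, while pushing all remaining components into the degenerate $K_1/K_2$ form.
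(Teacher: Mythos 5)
Your proposal is correct, and its overall skeleton matches the paper's: both reduce the two inequalities to the moment identities $\sum_i\eta_i=0$ and $\sum_i\eta_i^2=2(m-H(G))$ from Proposition \ref{ABS prop1}, and both settle the equality case of (i) by isolating a single nontrivial component, noting its spectrum is symmetric about the origin (Corollary \ref{bipartite}), identifying it as $K_{n_1,n_2}$ via Theorem \ref{cbipartite}, and disposing of the remaining components with Theorem \ref{one eig}. The differences are local but real. First, where the paper simply cites Das et al.\ for the bounds $\mathcal{E}_{\mathcal{ABS}}(G)\ge\sqrt{2\,\mathrm{trace}(\mathcal{AS}^2(G))}$ and $\mathcal{E}_{\mathcal{ABS}}(G)\le\sqrt{n\,\mathrm{trace}(\mathcal{AS}^2(G))}$ together with their equality conditions, you reprove them from scratch (the $P=N$ trick and Cauchy--Schwarz); this makes the argument self-contained at essentially no extra cost. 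Second, in the equality case of (ii) the paper argues spectrally: all $|\eta_i|$ equal forces at most two distinct eigenvalues, Theorems \ref{one eig} and \ref{complete 1} then make every component complete, and $K_{n_1}$ with $n_1\ge 3$ is excluded because its two eigenvalues have different absolute values. You instead observe that equality of all $|\eta_i|=\alpha$ gives $\mathcal{AS}(G)^2=\alpha^2 I$ and rule out $\alpha>0$ by producing a strictly positive off-diagonal entry of $\mathcal{AS}(G)^2$ inside any component of order at least $3$ (two neighbours of a degree-$\ge 2$ vertex, joined to it by positive-weight edges). Both routes work; yours avoids Theorem \ref{complete 1} entirely and is more elementary, while the paper's reuses the classification machinery already developed in Section 2. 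The only cosmetic point is that in both parts you verify the ``only if'' direction of the equality statements explicitly but leave the converse (that $pK_{n_1,n_2}\cup qK_2\cup rK_1$, respectively $pK_2\cup qK_1$, do attain equality) as an unstated routine check, which is also how the paper treats it.
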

\begin{proof} (i) From \cite[Theorem 4]{das2018degree}, we have $\mathcal{E}_{\mathcal{ABS}}(G)\ge \sqrt{2trace(\mathcal{AS}^{2}(G))}$ and the equality holds if and only if $\eta_{1}=-\eta_{n}$ and $\eta_{2}=\eta_{3}=\cdots=\eta_{n-1}=0$. Since $trace(\mathcal{AS}^{2}(G))=2m-H(G)$, we get $\mathcal{E}_{\mathcal{ABS}}(G)\ge2\sqrt{m-H(G)}$.  Suppose $\eta_{1}=-\eta_{n}$ and $\eta_{2}=\eta_{3}=\cdots=\eta_{n-1}=0$. Then $G$ is bipartite  (see, Corollary \ref{bipartite}). Also, if $H$ is a component of $G$, then $\mathcal{AS}(H)$ has either two or three  distinct eigenvalues, or all its eigenvalues are equal to 0. Thus, from Theorems \ref{cbipartite}, we get $H\cong K_{n_{1},n_{2}}$ with $n_{1}+n_{2}>2$, $K_{2}$ or $K_{1}$. Furthermore, if $K_{n_{1},n_{2}}~(n_{1}+n_{2}>2)$ is a component of $G$, then all other components of $G$ are either $K_{2}$ or $K_{1}$. Otherwise $\eta_{2}>0$, a contradiction.\\[2mm]
(ii) From\cite[Corollary 2]{das2018degree}, we have $\mathcal{E}_{\mathcal{ABS}}(G)\le \sqrt{n\, trace(\mathcal{AS}^{2}(G))}$ and the equality holds if and only of $|\eta_{1}|=|\eta_{2}|=\cdots=|\eta_{n}|$. Therefore, $\mathcal{E}_{\mathcal{ABS}}(G)\le \sqrt{2n\,(m-H(G))}$. Suppose $|\eta_{1}|=|\eta_{2}|=\cdots=|\eta_{n}|$. Then the eigenvalues of $\mathcal{AS}(G)$ are all equal or it has exactly two distinct eigenvalues. So, by Theorems \ref{one eig} and \ref{complete 1}, each component of $G$ is $K_{n_{1}}$ for some positive integer $n_{1}$. Furthermore, $n_{1}=1$ or $2$. Otherwise $\eta_{1}>\eta_{2}$. This completes the proof. 
\end{proof}
To prove our next upper bound on $\mathcal{E}_{\mathcal{ABS}}(G)$, we need the following lemma.
\begin{lemma}\label{strongly}\cite{cvetkovic1980spectra}
A regular connected graph $G$ is strongly regular if and only if it has three distinct eigenvalues.
\end{lemma}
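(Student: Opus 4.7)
The plan is to verify this classical equivalence via the quadratic polynomial relation satisfied by the adjacency matrix of a strongly regular graph. Throughout, let $A=A(G)$, $j$ denote the all-ones vector, and $J$ the all-ones matrix; I will use the standard convention that a strongly regular graph is neither complete nor empty.

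For the forward direction, suppose $G$ is a connected $k$-regular strongly regular graph with parameters $(n,k,\lambda,\mu)$. I would start from the defining adjacency identity
\begin{equation*}
A^{2}=kI+\lambda A+\mu(J-I-A),
\end{equation*}
which records that the $(i,j)$-entry of $A^{2}$ (the number of common neighbors of $v_{i}$ and $v_{j}$) equals $k$, $\lambda$, or $\mu$ according as $i=j$, $v_{i}\sim v_{j}$, or $v_{i}\not\sim v_{j}$. Rearranging to $A^{2}-(\lambda-\mu)A-(k-\mu)I=\mu J$ and restricting to the subspace $j^{\perp}$ (on which $J$ acts as zero), every eigenvalue of $A$ on $j^{\perp}$ is a root of $x^{2}-(\lambda-\mu)x-(k-\mu)=0$. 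Together with the Perron eigenvalue $k$, which is simple because $G$ is connected and regular, this yields at most three distinct eigenvalues of $A$. That there are exactly three then follows from the discriminant $(\lambda-\mu)^{2}+4(k-\mu)$ being strictly positive when $G$ is neither complete nor empty.

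For the converse, assume $G$ is $k$-regular and connected with exactly three distinct eigenvalues $k>r>s$. Connectivity and regularity make $k$ a simple eigenvalue with eigenvector $j$ by Perron-Frobenius. Since the minimal polynomial of $A$ is $(x-k)(x-r)(x-s)$, the matrix $(A-rI)(A-sI)$ vanishes on $j^{\perp}$; being real symmetric with image in $\mathrm{span}(j)$, it must be a scalar multiple of $J$, so $(A-rI)(A-sI)=\alpha J$ for some constant $\alpha$. Expanding gives
\begin{equation*}
A^{2}=(r+s)A-rs\,I+\alpha J,
\end{equation*}
and comparing entries at the diagonal, at adjacent pairs, and at non-adjacent pairs shows that the number of common neighbors of any two vertices depends only on whether the pair is adjacent or not, so $G$ is strongly regular.

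The main obstacle is the converse step: justifying cleanly that $(A-rI)(A-sI)$ equals exactly $\alpha J$. This relies on the spectral decomposition of the symmetric matrix $A$ via orthogonal projectors onto its eigenspaces, together with the one-dimensionality of the $k$-eigenspace guaranteed by connectivity. Once this matrix identity is in hand, extracting the strongly-regular parameters $\lambda$ and $\mu$ reduces to a routine entry-by-entry comparison.
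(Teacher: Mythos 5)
The paper does not prove this lemma at all: it is quoted as a known classical result with a citation to Cvetkovi\'c, Doob and Sachs, so there is no in-paper argument to compare against. Your proof is the standard textbook derivation and is correct: the identity $A^{2}=kI+\lambda A+\mu(J-I-A)$ forces the eigenvalues on $j^{\perp}$ to satisfy a fixed quadratic, and conversely the minimal polynomial $(x-k)(x-r)(x-s)$ together with the simplicity of the Perron eigenvalue yields $(A-rI)(A-sI)=\alpha J$ (indeed $\alpha=\tfrac{(k-r)(k-s)}{n}$), after which the entrywise comparison recovers constant $\lambda$ and $\mu$. One small point worth tightening in the forward direction: positivity of the discriminant only guarantees that the two roots $r\neq s$ of the quadratic are distinct, not that both actually occur as eigenvalues; to conclude there are \emph{exactly} three distinct eigenvalues you should add that if only one root occurred then $A$ would have just two distinct eigenvalues, forcing $G$ to be complete, contrary to the standing convention that a strongly regular graph is neither complete nor empty. (Positivity of the discriminant itself follows from $\lambda\le k-1$ and $\mu\le k$.) With that one-line addition the argument is complete.
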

\begin{theorem}
	Let G be a graph of order n with m edges. 
    If $m=H(G)$ or $2(m-H(G))\ge n$, then $\mathcal{E}_{\mathcal{ABS}}(G)\le \dfrac{2\mathcal{ABS}(G)}{n}+\sqrt{(n-1)\left(2m-2H(G)-\dfrac{4 (\mathcal{ABS}(G))^2}{n^2}\right)}$. Further, equality holds if and only if $G\cong pK_{2}\cup qK_{1}$, where $2p+q=n$, $G\cong K_{n}$ or $G$ is a non-complete strongly regular graph.
\end{theorem}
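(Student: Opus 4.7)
The plan is to follow the classical Koolen--Moulton upper-bound strategy, adapted to the $\mathcal{ABS}$ spectrum and coupled with the lower bound for $\eta_{1}(G)$ from Theorem \ref{lbe2}. Writing $m' := m-H(G)$, Proposition \ref{ABS prop1} gives $\sum_{i=1}^{n}\eta_{i}^{2} = 2m'$. Splitting
\[
\mathcal{E}_{\mathcal{ABS}}(G) = \eta_{1} + \sum_{i=2}^{n}|\eta_{i}|
\]
and applying Cauchy--Schwarz to the length-$(n-1)$ tail against the all-ones vector yields $\mathcal{E}_{\mathcal{ABS}}(G) \le f(\eta_{1})$, where $f(x) := x + \sqrt{(n-1)(2m' - x^{2})}$ on $[0,\sqrt{2m'}]$.

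A routine differentiation shows that $f$ is increasing on $[0,x_{0}]$ and decreasing on $[x_{0},\sqrt{2m'}]$, with $x_{0} := \sqrt{2m'/n}$. Theorem \ref{lbe2} supplies $\eta_{1} \ge 2\mathcal{ABS}(G)/n$, and $\eta_{1}^{2}\le 2m'$ keeps $\eta_{1}$ in the domain of $f$. The crux is to show that $2\mathcal{ABS}(G)/n$ itself lies on the decreasing branch, that is $2\mathcal{ABS}(G)/n \ge x_{0}$; this is precisely where the hypothesis $2m'\ge n$ enters. Since each edge summand $t_{ij} := \sqrt{1 - 2/(d_{i}+d_{j})}$ satisfies $t_{ij}\in[0,1)$ and hence $t_{ij} \ge t_{ij}^{2}$, summing over $E(G)$ yields the elementary inequality
\[
\mathcal{ABS}(G) = \sum_{v_{i}\sim v_{j}} t_{ij} \ge \sum_{v_{i}\sim v_{j}} t_{ij}^{2} = m' \ge n/2,
\]
whose square rearranges to $2\mathcal{ABS}(G)/n \ge x_{0}$. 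Monotonicity of $f$ on $[x_{0},\sqrt{2m'}]$ then produces $\mathcal{E}_{\mathcal{ABS}}(G) \le f(\eta_{1}) \le f(2\mathcal{ABS}(G)/n)$, which is the stated bound. The degenerate case $m=H(G)$ forces every $t_{ij}=0$, so that $\mathcal{AS}(G)$ is the zero matrix and both sides of the inequality vanish.

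For the equality analysis both inequalities must be simultaneously tight. Cauchy--Schwarz forces $|\eta_{2}|=\cdots=|\eta_{n}|$, while $\eta_{1}=2\mathcal{ABS}(G)/n$ in Theorem \ref{lbe2} forces $G$ to be regular. For regular $G$, $\mathcal{AS}(G)$ is a positive scalar multiple of $A(G)$, so the absolute-value condition descends to the adjacency spectrum, yielding at most three distinct adjacency eigenvalues, with the non-Perron ones of the form $\pm c$. The at-most-two-eigenvalue case, via Theorems \ref{one eig} and \ref{complete 1}, identifies $G$ as $pK_{2}\cup qK_{1}$ (the $\mathcal{AS}(G)=0$ branch) or $K_{n}$; the three-eigenvalue case, via Lemma \ref{strongly}, identifies each connected component as a non-complete strongly regular graph. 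A direct check confirms that $K_{n}$ and $pK_{2}\cup qK_{1}$ realize equality. The main obstacle of the proof is exactly the domain check in the second paragraph: without it, the lower bound from Theorem \ref{lbe2} cannot be pushed through the non-monotonic function $f$, and the two-case hypothesis is stated precisely so that the chain $\mathcal{ABS}(G) \ge m - H(G) \ge n/2$ closes this gap.
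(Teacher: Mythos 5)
Your argument is essentially the paper's proof: the same Cauchy--Schwarz decomposition $\mathcal{E}_{\mathcal{ABS}}(G)\le \eta_1+\sqrt{(n-1)\left(2m-2H(G)-\eta_1^2\right)}$, the same auxiliary function and monotonicity analysis, the same chain $\dfrac{2\mathcal{ABS}(G)}{n}\ge \dfrac{2(m-H(G))}{n}\ge\sqrt{\dfrac{2(m-H(G))}{n}}$ (your observation $t_{ij}\ge t_{ij}^{2}$ is exactly the paper's use of $\sqrt{x}\ge x$ on $[0,1]$), and the same equality analysis via the equality case of Theorem \ref{lbe2} together with Theorems \ref{one eig} and \ref{complete 1} and Lemma \ref{strongly}. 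Your explicit treatment of the degenerate case $m=H(G)$ (where $\mathcal{AS}(G)=0$ and both sides vanish) is a small improvement, since the paper's domain argument only covers $2(m-H(G))\ge n$.

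The one genuine omission is the converse for the third equality family: you verify that $K_n$ and $pK_2\cup qK_1$ attain equality but never check that a non-complete strongly regular graph does. Since the statement is an equivalence, this step cannot be skipped; the paper attempts it by asserting $\eta_1=\sqrt{k(k-1)}$ and $|\eta_j|=\sqrt{(n-k)(k-1)/(n-1)}$ for $j\ge 2$. Moreover, this is precisely the delicate point: equality forces $|\eta_2|=\cdots=|\eta_n|$, which for a $k$-regular graph means the two non-Perron adjacency eigenvalues $r>0>s$ of the strongly regular graph must satisfy $|r|=|s|$, i.e.\ $r+s=\lambda-\mu=0$. This holds only for strongly regular graphs with $\lambda=\mu$ and fails, for instance, for the Petersen graph (adjacency spectrum $\{3,1^{(5)},(-2)^{(4)}\}$), which satisfies the hypothesis $2(m-H(G))\ge n$ but gives strict inequality. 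So the gap you leave coincides with an overstatement in the theorem's own equality clause; were you to close it honestly, only the strongly regular graphs with $\lambda=\mu$ would survive in the characterization.
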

\begin{proof}
     Using Cauchy-Schwarz inequality, \begin{align*}
        \mathcal{E}_{\mathcal{ABS}}(G)=\sum\limits_{i=1}^n |\eta_i|&=\eta_1+\sum\limits_{i=2}^n |\eta_i|
        \le \eta_1+\sqrt{(n-1)(2m-2H(G)-\eta_1^2)},  
    \end{align*} where the equality holds if and only if $|\eta_{2}|=|\eta_{3}|=\cdots=|\eta_{n}|$. Let $\displaystyle g(x)=x+\sqrt{(n-1)(2m-2H(G)-x^2)}$. Then by first derivative test, the function $g$ is decreasing for $\sqrt{\dfrac{2(m-H(G))}{n}}\le x\le \sqrt{2(m-H(G))}.$  
Now, $\dfrac{2\mathcal{ABS}(G)}{n}=\dfrac{2}{n}\sum\limits_{v_iv_j\in E(G)}\sqrt{\dfrac{d_i+d_j-2}{d_i+d_j}}\ge \dfrac{2}{n}\sum\limits_{v_iv_j\in E(G)}\dfrac{d_i+d_j-2}{d_i+d_j}=\dfrac{2(m-H(G))}{n}\ge \sqrt{\dfrac{2(m-H(G))}{n}}$ (because $2(m-H(G))\ge n$). That is, $\dfrac{2\mathcal{ABS}(G)}{n}\ge \sqrt{\dfrac{2(m-H(G))}{n}}$.  Upon combining the above inequality with Theorem \ref{lbe2}, we get
\begin{equation}
	\sqrt{\dfrac{2(m-H(G))}{n}}\le \dfrac{2 \mathcal{ABS}(G)}{n}\le \eta_{1}\le \sqrt{2(m-H(G))}.
\end{equation}
 Therefore, \begin{equation}\label{eta}
 \mathcal{E}_{\mathcal{ABS}}(G)\le g(\eta_1)\le g\left(\dfrac{2\mathcal{ABS}(G)}{n}\right)= \dfrac{2\mathcal{ABS}(G)}{n}+\sqrt{(n-1)\left(2m-2H(G)-\dfrac{4 (\mathcal{ABS}(G))^2}{n^2}\right)}
. \end{equation}
  Suppose the equality in equation (\ref{eta}) holds. Then $\eta_1=\dfrac{2\mathcal{ABS}(G)}{n}$ and $|\eta_2|=|\eta_{3}|=\cdots=|
    \eta_{n}|$. Thus, $\mathcal{AS}(G)$ has at most three distinct eigenvalues. If  $\mathcal{AS}(G)$ has at most two distinct eigenvalues, then by Theorems \ref{one eig} and \ref{complete 1}, $G\cong pK_{2}\cup qK_{1}$, where $p+q=n$, or $G\cong K_{n}$. Otherwise, $\mathcal{AS}(G)$ has exactly three distinct eigenvalues. Now, by Theorem~\ref{lbe2}, $G$ is  $k$-regular graph for some constant $k$, and so $\mathcal{AS}(G)=\sqrt{\dfrac{k-1}{k}}A(G)$. If $\mathcal{AS}(G)$ has exactly three distinct eigenvalues, then $G$ has exactly three distinct  eigenvalues. Therefore by Lemma \ref{strongly}, $G$ must be a non-complete strongly regular graph. Conversely, if $G\cong pK_{2}\cup qK_{1}$, where $2p+q=n$, or $G\cong K_{n}$, then one can easily see that the equality in (\ref{eta}) holds. Suppose $G$ is a non-complete strongly $k$-regular graph, then $\eta_{1}=\sqrt{k(k-1)}$, $|\eta_{j}|=\sqrt{\dfrac{(n-k)(k-1)}{n-1}}$ for $j=2,3,\ldots,n$. Now, one can easily check that equality in (\ref{eta}) holds. This completes the proof of the theorem. 
    \end{proof} 
The following lemmas are useful to prove our next result.

\begin{lemma}{\rm\cite{merikoski2003characterizations}}\label{spread}
	If $M$ is a Hermitian $n\times n$ matrix, then $\displaystyle|\theta_1(M)-\theta_n(M)|\ge 2\max_{j}\left(\sum\limits_{k:k\neq j}|a_{jk}|^2\right)^{\frac{1}{2}}$.
\end{lemma}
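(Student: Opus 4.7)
The plan is to establish the inequality by exhibiting, for each index $j$, a probability distribution on the eigenvalues of $M$ whose variance equals $\sum_{k \neq j} |a_{jk}|^2$, and then applying Popoviciu's inequality on variances. The key identity driving the argument is the computation
\[
e_j^* M^2 e_j - (e_j^* M e_j)^2 = \sum_{k \neq j} |a_{jk}|^2,
\]
which follows from $e_j^* M e_j = a_{jj}$ (real by Hermiticity) together with $e_j^* M^2 e_j = \|M e_j\|^2 = \sum_k |a_{kj}|^2$ and $|a_{jk}| = |a_{kj}|$.

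The next step is to recognize the left-hand side as a variance. Writing $M = \sum_i \theta_i u_i u_i^*$ in spectral form with orthonormal eigenvectors $u_i$, and setting $p_i := |u_i^* e_j|^2$, one checks that $(p_i)$ is a probability distribution with
\[
e_j^* M e_j = \sum_i \theta_i p_i, \qquad e_j^* M^2 e_j = \sum_i \theta_i^2 p_i,
\]
so the identity above expresses $\sum_{k \neq j} |a_{jk}|^2$ as exactly the variance of the random variable taking value $\theta_i$ with probability $p_i$.

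Finally, since this random variable is supported in the interval $[\theta_n(M), \theta_1(M)]$, Popoviciu's classical bound on the variance of a bounded random variable yields
\[
\sum_{k \neq j} |a_{jk}|^2 \le \frac{(\theta_1(M) - \theta_n(M))^2}{4}.
\]
Taking square roots, maximizing over $j$, and multiplying by $2$ gives the stated inequality. The only non-routine ingredient is Popoviciu's inequality itself, which I would establish in one line from $(X - \theta_n)(\theta_1 - X) \ge 0$ by taking expectations and maximizing the resulting quadratic in $\mathbb{E}[X]$; apart from this I do not anticipate any genuine obstacle, as every remaining step is a direct computation.
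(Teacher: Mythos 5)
Your argument is correct and complete. Note that the paper does not prove this lemma at all---it is quoted from Merikoski and Kumar \cite{merikoski2003characterizations}---so there is no in-paper proof to compare against; what you have written is a valid self-contained derivation. Each step checks out: for Hermitian $M$ one has $e_j^*M^2e_j=\sum_k|a_{kj}|^2$ and $e_j^*Me_j=a_{jj}\in\mathbb{R}$, so $e_j^*M^2e_j-(e_j^*Me_j)^2=\sum_{k\neq j}|a_{jk}|^2$; the weights $p_i=|u_i^*e_j|^2$ sum to $1$ and turn this quantity into the variance of a random variable supported in $[\theta_n(M),\theta_1(M)]$; and Popoviciu's bound $\operatorname{Var}(X)\le\tfrac{1}{4}(\theta_1-\theta_n)^2$ follows from $\mathbb{E}[(X-\theta_n)(\theta_1-X)]\ge 0$ as you indicate. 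This is in essence the standard route to the Merikoski--Kumar bound: the general inequality $|\theta_1(M)-\theta_n(M)|\ge 2\left(x^*M^2x-(x^*Mx)^2\right)^{1/2}$ for unit vectors $x$, specialized to $x=e_j$ and maximized over $j$. A sanity check on $K_{1,\Delta}$ (where both sides equal $2\sqrt{\Delta}$ for the adjacency matrix) confirms the constant $2$ is right and the bound is sharp.
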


\begin{lemma}\label{spread bound}
	Let $G$ be a connected graph of order $n\ge 2$ with maximum degree $\Delta$ and minimum degree $\delta$. Then
	\begin{center}
		$2\sqrt{\dfrac{\Delta\left(\Delta+\delta-2\right)}{\Delta+\delta}}\le \eta_1+|\eta_n|\le  2\sqrt{m-H(G)}$.\end{center}
\end{lemma}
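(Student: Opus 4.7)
The plan is to obtain the two inequalities separately: the lower bound from Merikoski's spread inequality (Lemma~\ref{spread}) and the upper bound from the trace identity in Proposition~\ref{ABS prop1}. The key preliminary observation is that, for a connected $G$ with $n\ge 2$, either $G\cong K_2$ (in which case $\mathcal{AS}(G)$ is the zero matrix and both inequalities are trivial), or there is an edge $v_iv_j$ with $d_i+d_j>2$, so $\mathcal{AS}(G)$ is a nonzero nonnegative irreducible matrix and the Perron--Frobenius theorem gives $\eta_1>0$; combined with $\sum_i\eta_i=0$ this forces $\eta_n\le 0$. Hence $\eta_1+|\eta_n|=\eta_1-\eta_n$ is exactly the spread of $\mathcal{AS}(G)$, which is what Lemma~\ref{spread} controls.

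For the lower bound, I would apply Lemma~\ref{spread} with $M=\mathcal{AS}(G)$. For a row index $j$,
\[
\sum_{k\ne j}|a_{jk}|^2 \;=\; \sum_{v_k:\,v_j\sim v_k}\frac{d_j+d_k-2}{d_j+d_k}.
\]
Choose $v_j$ to be a vertex of maximum degree $\Delta$. For every neighbor $v_k$ of $v_j$ one has $d_j+d_k\ge\Delta+\delta$, and since $t\mapsto 1-2/t$ is increasing on $(0,\infty)$, each summand is at least $(\Delta+\delta-2)/(\Delta+\delta)$. As there are exactly $\Delta$ such summands,
\[
\max_{j}\sum_{k\ne j}|a_{jk}|^2 \;\ge\; \frac{\Delta(\Delta+\delta-2)}{\Delta+\delta},
\]
and Lemma~\ref{spread} immediately yields $\eta_1+|\eta_n|\ge 2\sqrt{\Delta(\Delta+\delta-2)/(\Delta+\delta)}$.

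For the upper bound, Proposition~\ref{ABS prop1} gives $\sum_{i=1}^n\eta_i^2=2(m-H(G))$, so in particular $\eta_1^2+\eta_n^2\le 2(m-H(G))$. Combining this with the elementary inequality $(a+b)^2\le 2(a^2+b^2)$ applied to $a=\eta_1$ and $b=|\eta_n|$,
\[
(\eta_1+|\eta_n|)^2 \;\le\; 2(\eta_1^2+\eta_n^2) \;\le\; 4\bigl(m-H(G)\bigr),
\]
which gives $\eta_1+|\eta_n|\le 2\sqrt{m-H(G)}$.

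Neither step presents a real obstacle; the only point that deserves care is verifying that $\eta_n\le 0$ so that the spread $\eta_1-\eta_n$ produced by Lemma~\ref{spread} actually coincides with the quantity $\eta_1+|\eta_n|$ appearing in the statement, and handling the degenerate $K_2$ case separately. Everything else is a direct substitution.
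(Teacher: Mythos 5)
Your proposal is correct and follows essentially the same route as the paper: the lower bound comes from Lemma~\ref{spread} applied to a maximum-degree row of $\mathcal{AS}(G)$, and the upper bound from $(\eta_1+|\eta_n|)^2\le 2(\eta_1^2+\eta_n^2)\le 4(m-H(G))$ via Proposition~\ref{ABS prop1}. Your extra care in checking $\eta_n\le 0$ (so that the spread equals $\eta_1+|\eta_n|$) is a detail the paper leaves implicit, but it does not change the argument.
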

\begin{proof}
	By Lemma~\ref{spread}, \begin{equation*}\displaystyle \eta_1+|\eta_n|\ge 2~ \max_i\left(\sum\limits_{v_{j}:v_i\sim v_j}\frac{d_i+d_j-2}{d_i+d_j}\right)^{1/2}\ge 2\sqrt{\frac{\Delta(\Delta+\delta-2)}{\Delta+\delta}}.\end{equation*}
	Proving the left inequality.
	Now, by Cauchy-Schwarz inequality and from Proposition \ref{ABS prop1}, \begin{equation*}
		\eta_1+|\eta_n|\le \sqrt{2(\eta_1^2+\eta_n^2)}\le \sqrt{4(m-H(G))}=2\sqrt{m-H(G)}.  
	\end{equation*}
\end{proof}
\begin{theorem}
Let $G$ be  $(n, m)$-graph with maximum degree $\Delta$.  	
If $\dfrac{2\left(m-H(G)\right)}{n}\le \dfrac{\Delta\left(\Delta-1\right)}{\Delta+1}$, $\mathcal{E}_{\mathcal{ABS}}(G)\le 2\sqrt{\dfrac{\Delta\left(\Delta-1\right)}{\Delta+1}}+\sqrt{2(n-2)\left(m-H(G)-\dfrac{\Delta\left(\Delta-1\right)}{\Delta+1}\right)}$. Equality holds if and only if $G\cong pK_{1,\Delta}\cup qK_{2}\cup rK_{1}$, where $p=0$ or $1$ and $p(\Delta+1)+2q+r=n$.
\end{theorem}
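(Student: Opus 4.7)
The plan is to split $\mathcal{E}_{\mathcal{ABS}}(G) = \eta_1 + |\eta_n| + \sum_{i=2}^{n-1}|\eta_i|$ and treat the tail with Cauchy--Schwarz while handling the two extremes together. Applying Cauchy--Schwarz to the tail and using $\sum_i \eta_i^2 = 2(m-H(G))$ from Proposition \ref{ABS prop1},
\begin{equation*}
\mathcal{E}_{\mathcal{ABS}}(G) \;\le\; \eta_1 + |\eta_n| + \sqrt{(n-2)\bigl(2(m-H(G)) - \eta_1^2 - \eta_n^2\bigr)}.
\end{equation*}
By the AM--QM inequality $\eta_1 + |\eta_n| \le \sqrt{2(\eta_1^2 + \eta_n^2)}$, setting $u := \eta_1^2 + \eta_n^2$ gives $\mathcal{E}_{\mathcal{ABS}}(G) \le g(u)$, where $g(u) := \sqrt{2u} + \sqrt{(n-2)(2(m-H(G))-u)}$.

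A one-variable calculus check shows $g'$ vanishes at $u^* = 4(m-H(G))/n$ and $g$ is strictly decreasing on $[u^*, 2(m-H(G))]$. To force $u$ into this decreasing regime, I apply Lemma \ref{spread} to $\mathcal{AS}(G)$ at a vertex $v^*$ of maximum degree $\Delta$. Since every neighbor $v_j$ of $v^*$ has $d_j \ge 1$,
\begin{equation*}
(\eta_1 - \eta_n)^2 \;\ge\; 4 \sum_{v_j:\, v^*\sim v_j}\frac{\Delta + d_j - 2}{\Delta + d_j} \;\ge\; \frac{4\Delta(\Delta-1)}{\Delta+1},
\end{equation*}
whence $u \ge (\eta_1 - \eta_n)^2/2 \ge 2\Delta(\Delta-1)/(\Delta+1)$. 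The hypothesis is exactly $u^* \le 2\Delta(\Delta-1)/(\Delta+1)$, placing $u$ in the decreasing region of $g$, so $\mathcal{E}_{\mathcal{ABS}}(G) \le g(u) \le g\bigl(2\Delta(\Delta-1)/(\Delta+1)\bigr)$, which simplifies to the asserted bound.

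For the equality characterization, equality forces (i) $|\eta_2| = \cdots = |\eta_{n-1}|$ (from Cauchy--Schwarz), (ii) $\eta_1 = |\eta_n|$ (from AM--QM together with strict monotonicity of $g$), and (iii) the entire spread chain at $v^*$ is tight. From (iii), every neighbor of $v^*$ must have degree $1$, and a Rayleigh-quotient argument shows that in the component $G_0$ containing $v^*$ the unit vector $e_{v^*}$ is a Perron eigenvector of $\mathcal{AS}(G_0)^2$. This forces $(\mathcal{AS}(G_0)^2)_{j,v^*} = 0$ for every $j \ne v^*$, meaning $v^*$ shares no neighbor with any other vertex; combined with connectedness of $G_0$, one gets $V(G_0) = \{v^*\} \cup N(v^*)$, so $G_0 \cong K_{1,\Delta}$. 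Since $K_{1,\Delta}$ already contributes $\Delta - 1$ zeros to the middle spectrum, (i) forces the common absolute value to be $0$ whenever $\Delta \ge 2$; hence every other component has only zero ABS eigenvalues and, by Theorem \ref{one eig}, equals $K_2$ or $K_1$. The trivial cases $\Delta \le 1$ (where $\mathcal{AS}(G)$ is the zero matrix) yield $G \cong qK_2 \cup rK_1$, matching $p = 0$. The converse is a direct spectrum calculation on $pK_{1,\Delta} \cup qK_2 \cup rK_1$.

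The main obstacle I anticipate is step (iii) of the equality analysis: translating ``equality throughout the spread chain at the maximum-degree vertex'' into the rigid structural conclusion $G_0 \cong K_{1,\Delta}$. This rests on two observations that must be combined carefully: the degree-$1$ constraint on $N(v^*)$ coming from the term-by-term tightness, and the non-shared-neighbor constraint forced by $e_{v^*}$ being a top eigenvector of $\mathcal{AS}(G_0)^2$, which together exclude any vertex in $G_0$ beyond $v^*$ and its leaves.
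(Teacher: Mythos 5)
Your argument is correct and matches the paper's proof in all essentials: the same split $\eta_1+|\eta_n|+\sum_{i=2}^{n-1}|\eta_i|$, Cauchy--Schwarz on the tail, the Merikoski--Kumar spread bound evaluated at a maximum-degree vertex to push $\eta_1^2+\eta_n^2$ into the decreasing range of the one-variable function, and the same equality analysis; your parametrization by $u=\eta_1^2+\eta_n^2$ is simply the substitution $u=2x^2$ applied to the paper's $f(x)=2x+\sqrt{(n-2)\left(2m-2H(G)-2x^2\right)}$ with $x=\tfrac{\eta_1+|\eta_n|}{2}$. The only notable (and harmless) deviation is in the equality case, where tightness of the spread chain already forces $d(v_j)=1$ for every neighbor of $v^{*}$, so $G_0\cong K_{1,\Delta}$ follows from connectedness alone and your extra Rayleigh-quotient/Perron-eigenvector step for $\mathcal{AS}(G_0)^2$ is redundant (the paper instead reaches $G_0\cong K_{1,\Delta}$ via bipartiteness and interlacing against an induced $K_{1,\Delta}$).
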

\begin{proof}
Let $\eta_{1}\ge \eta_{2}\ge\cdots\ge \eta_{n}$ be the $\mathcal{ABS}$ eigenvalues of $G$. Using Cauchy-Schwarz inequality, 
    \begin{align*}
        \mathcal{E}_{\mathcal{ABS}}(G)=\eta_1+|\eta_n|+\sum\limits_{i=2}^{n-1}|\eta_i|\le \eta_1+|\eta_n|+\sqrt{\sum\limits_{i=2}^{n-1}(n-2)|\eta_i|^{2}},
       \end{align*}
   where the equality holds if and only if $|\eta_{2}|=|\eta_{3}|=\cdots=|\eta_{n-1}|$.
   Therefore, by Proposition \ref{ABS prop1},
   \begin{align*} 
    \mathcal{E}_{\mathcal{ABS}}(G) &\le \eta_1+|\eta_n|+\sqrt{(n-2)(2m-2H(G)-\eta_1^2-\eta_n^2)}\label{cs2}.
  \end{align*}
Further, by A.M.-G.M. inequality, $2\sqrt{\eta_{1}\eta_{n}}\le\eta_{1}+|\eta_{n}|$ and the equality holds if and only if $\eta_{1}=|\eta_{n}|$, Thus
\begin{align*}    
   \mathcal{E}_{\mathcal{ABS}}(G) &\le \eta_1+|\eta_n|+\sqrt{(n-2)\left(2m-2H(G)-\frac{\left(\eta_1+|\eta_n|\right)^2}{2}\right)}.
    \end{align*}
 Let $f(x)=2x+\sqrt{(n-2)(2m-2H(G)-2x^2)}$. Then $f$ is decreasing for $\sqrt{\dfrac{2(m-H(G))}{n}}\le x\le \sqrt{m-H(G)}$. By Lemma~\ref{spread bound}, \begin{equation*}
        \sqrt{\frac{2(m-H(G))}{n}}\le \sqrt{\frac{\Delta(\Delta-1)}{\Delta+1}}\le \frac{\eta_1+|\eta_n|}{2}\le  \sqrt{m-H(G)}.
    \end{equation*} So,\begin{align}\label{cs7}
        E_{\mathcal{ABS}}(G)&\le f\left(\frac{\eta_1+|\eta_n|}{2}\right)\le f\left(\sqrt{\frac{\Delta(\Delta-1)}{\Delta+1}}\right)\notag\\
        &=2\sqrt{\frac{\Delta(\Delta-1)}{\Delta+1}}+\sqrt{(n-2)\left(2m-2H(G)-2\frac{\Delta(\Delta-1)}{\Delta+1}\right)}.
    \end{align} 
Suppose the equality in equation (\ref{cs7}) holds. Then $\eta_{1}=|\eta_{n}|=\sqrt{\dfrac{\Delta(\Delta-1)}{\Delta+1}}$ and $|\eta_{2}|=|\eta_{3}|=\cdots=|\eta_{n-1}|$. Thus, by Perron-Frobenius theorem, $G$ is a bipartite graph. 
If $\Delta=1$, then, $G\cong qK_{2}\cup rK_{1}$, where $2q+r=n$. Otherwise, $\Delta>1$, and so $\eta_{1}=|\eta_{n}|>0$ . Let $H$ be component of $G$ having a vertex of degree $\Delta$. Since $G$ is bipartite, $K_{1,\Delta}$ is an induced subgraph of $H$. So, by Cauchy's interlacing theorem,
$\eta_{1}(H)\ge \eta_{1}(K_{1,\Delta})=\sqrt{\dfrac{\Delta(\Delta-1)}{\Delta+1}}$.  Moreover the equality holds if and only if $H\cong K_{1,\Delta}$.  Now, $\eta_{1}=\sqrt{\dfrac{\Delta(\Delta-1)}{\Delta+1}}\ge\eta_{1}(H)$ because $H$ is a component of $G$. Therefore, $\eta_{1}(H)=\eta_{1}=\dfrac{\Delta(\Delta-1)}{\Delta+1}$ and $H\cong K_{1,\Delta}$. Further,   $0$ is an $\mathcal{ABS}$ eigenvalue of $H$, and thus  $\mathcal{ABS}$ eigenvalues of $G$ are $\eta_{1}=|\eta_{n}|=\sqrt{\dfrac{\Delta(\Delta-1)}{\Delta+1}}$ and $|\eta_{2}|=|\eta_{3}|=\cdots=|\eta_{n-1}|=0$. Therefore, if $H_{1}\ncong H$ is a component of $G$, then all its $\mathcal{ABS}$ eigenvalues are equal to $0$. Therefore, by Theorem \ref{one eig}, $H_{1}\cong K_{2}$ or $K_{1}$. Thus $G\cong pK_{1,\Delta}\cup qK_{2}\cup rK_{1}$, where $p=0$ or $1$ and $p(\Delta+1)+2q+r=n$. Conversely, if  $G\cong pK_{1,\Delta}\cup qK_{2}\cup rK_{1}$, where $p=0$ or $1$ and $p(\Delta+1)+2q+r=n$, then one can easily verify that the equality holds.
\end{proof}
\begin{theorem}
    Let $\varphi_1\ge\varphi_2\ge\cdots\ge\varphi_n$ be the $\mathcal{ABC}$-eigenvalues and $\eta_1\ge\eta_2\ge\cdots\ge\eta_n$ be the $\mathcal{ABS}$ eigenvalues of a graph $G$ without pendent vertices. Then $\mathcal{E}_{\mathcal{ABS}}(G)\ge \sqrt{\dfrac{2}{n}}\mathcal{E}_{\mathcal{ABC}}(G)$.
\end{theorem}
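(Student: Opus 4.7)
The plan is to chain together three standard estimates: a McClelland-type lower bound for $\mathcal{E}_{\mathcal{ABS}}(G)$, an edge-by-edge comparison of the traces $trace(\mathcal{AS}^{2}(G))$ and $trace(\mathcal{AC}^{2}(G))$ that uses the no-pendent-vertex hypothesis, and a Cauchy--Schwarz upper bound for $\mathcal{E}_{\mathcal{ABC}}(G)$.

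First I would exploit $d_i\ge 2$ for every vertex: for every edge $v_iv_j$ this gives $(d_i-1)(d_j-1)\ge 1$, equivalently $d_id_j\ge d_i+d_j$, and therefore $\dfrac{d_i+d_j-2}{d_i+d_j}\ge \dfrac{d_i+d_j-2}{d_id_j}$. Summing over $E(G)$ and multiplying by $2$ produces $trace(\mathcal{AS}^{2}(G))\ge trace(\mathcal{AC}^{2}(G))$, and this is the only step where the absence of pendent vertices is genuinely used.

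Next I would invoke two energy bounds that are already available in the paper. On the $\mathcal{ABS}$ side, the inequality from \cite[Theorem 4]{das2018degree} (used earlier in part~(i) of the first theorem of Section~4, and valid for any trace-zero symmetric matrix) gives $\mathcal{E}_{\mathcal{ABS}}(G)\ge\sqrt{2\,trace(\mathcal{AS}^{2}(G))}$. On the $\mathcal{ABC}$ side, Cauchy--Schwarz applied to $\sum_{i=1}^{n}1\cdot|\varphi_i|$ yields the classical McClelland-type bound $\mathcal{E}_{\mathcal{ABC}}(G)\le\sqrt{n\,trace(\mathcal{AC}^{2}(G))}$, which rearranges to $trace(\mathcal{AC}^{2}(G))\ge \mathcal{E}_{\mathcal{ABC}}(G)^{2}/n$.

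Chaining these three estimates gives $\mathcal{E}_{\mathcal{ABS}}(G)^{2}\ge 2\,trace(\mathcal{AS}^{2}(G))\ge 2\,trace(\mathcal{AC}^{2}(G))\ge \tfrac{2}{n}\mathcal{E}_{\mathcal{ABC}}(G)^{2}$, and taking square roots yields the asserted bound. I do not anticipate any real obstacle: the trace comparison is an elementary edge-wise inequality, and both McClelland-type bounds are standard tools that have already been cited in the paper.
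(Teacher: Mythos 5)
Your proposal is correct and follows essentially the same route as the paper: the same edge-wise comparison $\frac{d_i+d_j-2}{d_i+d_j}\ge\frac{d_i+d_j-2}{d_id_j}$ from the no-pendent-vertex hypothesis, sandwiched between the bound $\mathcal{E}_{\mathcal{ABS}}(G)^2\ge 2\,\mathrm{trace}(\mathcal{AS}^2(G))$ and the Cauchy--Schwarz bound on $\mathcal{E}_{\mathcal{ABC}}(G)$. The only cosmetic difference is that you cite \cite[Theorem 4]{das2018degree} for the first bound, whereas the paper re-derives it inline via the triangle inequality and the trace-zero property.
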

\begin{proof}
    We have, 
   \allowdisplaybreaks
   \begin{align*}\label{15}
        (\mathcal{E}_{\mathcal{ABS}}(G))^2&=\left(\sum\limits_{i=1}^n |\eta_i|\right)^{2}=\sum\limits_{i=1}^n \eta_i^2+2\sum\limits_{i<j}|\eta_i||\eta_j| \notag \\
        &\ge \sum\limits_{i=1}^n \eta_i^2+2 \left|\sum\limits_{i<j}\eta_i\eta_j\right|~~ \text{(by triangle inequality)}\notag \\
        &= 2 \sum\limits_{i=1}^n \eta_i^2 ~~~\left(\text{because} \sum\limits_{i=1}^n \eta_i^2=-2 \sum\limits_{i<j}\eta_i\eta_j\right)\notag\\
        &=4\sum_{v_{i}v_{j}\in E(G)}\dfrac{d_{i}+d_{j}-2}{d_{i}+d_{j}}\notag \\
        &\ge 4\sum_{v_{i}v_{j}\in E(G)}\dfrac{d_{i}+d_{j}-2}{d_{i}d_{j}}\\
        &=2\sum_{i=1}\varphi_{i}^{2}\ge \dfrac{2}{n}\left(\sum_{i=1}|\varphi_{i}|\right)^{2} (\text{by Cauchy-Schwarz inequality})\\
        &=\dfrac{2}{n}\left(\mathcal{E_{ABC}}(G)\right)^{2}.
    \end{align*} 
Thus, $\mathcal{E}_{\mathcal{ABS}}(G)\ge \sqrt{
\dfrac{2}{n}}\mathcal{E_{ABC}}(G)$.
\end{proof}
 \section{QSPR analysis of benzenoid hydrocarbon}
In this section, we show that the physicochemical properties, namely, the boiling point (BP) and pi-electron energy ($\mathcal{E}_{\pi}$)of benzenoid hydrocarbons can be modeled using $\mathcal{ABS}$-energy.
 The experimental values listed in this section are taken from \cite{mondal2023degree,das2022ve,ramane2017status}. The hydrogen-suppressed molecular graphs are depicted in Figure~\ref{benzoid}. 
\begin{figure}[h]
	\centering
\includegraphics[width=0.9\textwidth]{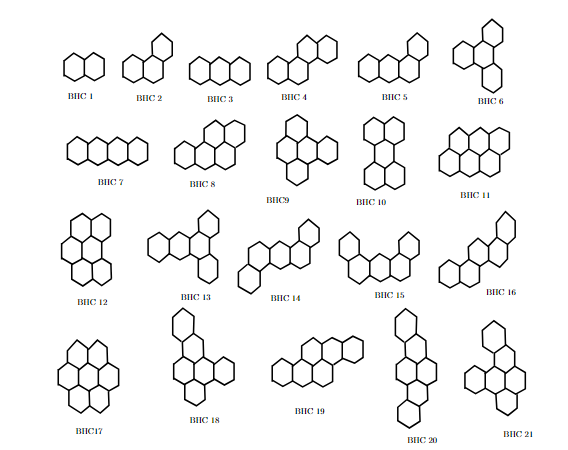}
	\caption{Hydrogen-suppressed molecular graph of benzenoid hydrocarbons}
	\label{benzoid}
\end{figure}
 The calculated values of $\mathcal{ABS}$ energy for benzenoid hydrocarbons are shown in Table~\ref{table} . 
\begin{table}[H]
	\begin{minipage}[c]{0.45\textwidth}
		\begin{tabular}{|c|c|c|c|}
			\hline 
			Compound & $\mathcal{E}_{\mathcal{ABS}}$ & BP  & $\mathcal{E}_{\pi}$ \\ \hline
			BHC1&	10.089&	218&	13.6832\\
			BHC2&	14.5445&	338&	19.4483\\
			BHC3&	14.4865&	340&	19.3137\\
			BHC4&	18.9799	&431	&25.1922\\
			BHC5&	18.9015	&425	&25.1012\\
			BHC6&	19.0111	&429	&25.2745\\
			BHC7&	18.8744	&440	&24.9308\\
			BHC8&	21.5036	&496	&28.222\\
			BHC9&	21.5786	&493	&28.3361\\
			BHC10&	21.4939	&497	&28.2453\\
			BHC11&	24.0287	&547	&31.253\\
			\hline
		\end{tabular}
	\end{minipage}
	\begin{minipage}[c]{0.45\textwidth}
		\begin{tabular}{|c|c|c|c|}
			\hline
			Compound & $\mathcal{E}_{\mathcal{ABS}}$ & BP  & $\mathcal{E}_{\pi}$ \\ \hline
			BHC12&	23.5972&	542&	31.4251\\
			BHC13&	23.4346	&535	&30.9418\\
			BHC14&	23.4184	&536	&30.8805\\
			BHC15&	23.3769	&531	&30.8795\\
			BHC16&	23.4463	&519	&30.9432\\
			BHC17&	26.7119	&590	&34.5718\\
			BHC18&	25.9835	&592	&34.0646\\
			BHC19&	25.915	&596	&33.1892\\
			BHC20&	25.9357	&594	&33.9542\\
			BHC21&	25.9565	&595	&34.0307\\
			-&-&-&-\\
			\hline
		\end{tabular}
	\end{minipage}
	\caption{Experimental physicochemical properties and theoretical $\mathcal{ABS}$ energy of benzenoid hydrocarbons.}
	\label{table}
\end{table}
Consider the following model:
\begin{equation}\label{qspr}
	Y= A (\pm S_e)\mathcal{E}_{\mathcal{ABS}}+B(\pm S_e),
\end{equation}
 where $Y, A, S_e$ and $B$ denote the property, slope, standard error of coefficients and intercept, respectively. We denote  the correlation coefficient,  standard error of the model, the $F$-test value  and the significance by $r$, $SE$, $F$ and $SF$, respectively.\\ For benzenoid hydrocarbons, it is found that the $\mathcal{ABS}$ energy has a strong correlation with the boiling point and pi-electron energy. In fact, we get the following regression equations for benzenoid hydrocarbons using model~(\ref{qspr}).
\begin{equation}\label{BP}
	BP=22.684(\pm 0.4013)\mathcal{E}_{\mathcal{ABS}}+2.25(\pm 8.7952),\end{equation}
~~~~~~~~~~~~~~~~~$r^2=0.9941,~ SE=7.8838, ~F=3194.002,~ SF=1.23\times 10^{-22}$.
\begin{figure}[H]
	\includegraphics[width=6.5in, height=2in]{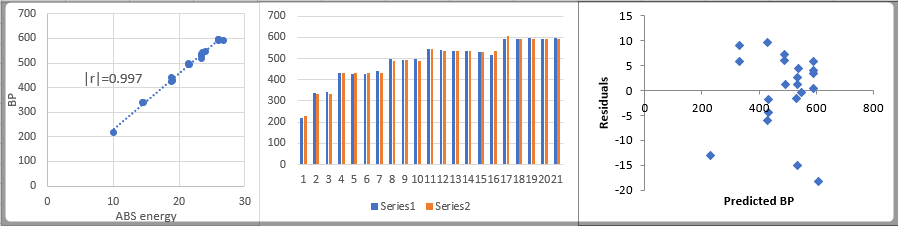}
	\caption{Linear relation of $\mathcal{ABS}$ energy with BP, experimental and predicted BP and residual plot.}
\end{figure}
\begin{equation}\label{Pi}
	\mathcal{E}_{\pi}=-1.2668(\pm 0.01235)\mathcal{E}_{\mathcal{ABS}}+1.0586(\pm 0.2706),
\end{equation}
~~~~~~~~~~~~~~~~$r^2=0.9982,~ SE=0.2425, F=10522.29,~ SF=1.54\times 10^{-27}$.
\begin{figure}[H]
	\includegraphics[width=6.5in, height=2in]{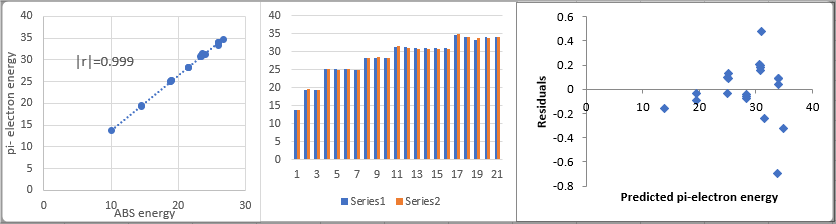}
	\caption{Linear relation of $\mathcal{ABS}$ energy  with pi-electron energy, experimental and predicted pi-electron energy and residual plot.}
\end{figure}
The data variance for BP and pi-electron energy is around 99\%. The standard errors are very low, particularly in model~(\ref{Pi}), where they are significantly small. This low standard error enhances the model's consistency and increases the F-value, especially for pi-electron energy. The SF values are significantly below 0.05.  The predicted properties from model~(\ref{qspr}) are compared with the experimental properties using bar diagrams, where series 1 is related to experimental value and series 2 is related to predicted value. These figures show that the experimental and predicted data align well. Additionally, the residuals are randomly scattered around the zero line, indicating that the model is consistent.\\
In \cite{mondal2023degree,das2022ve,ramane2017status,liu2021more,nithya2023smallest} the QSPR analysis of benzenoid hydrocarbons is done using the second-degree based entropy, ve-degree irregularity index, Albertson index, first and second status connectivity indices, first and second eccentric connectivity indices, Wiener index, Sombor index, reduced Sombor index and $\mathcal{ABS}$ index. It is observed that the $|r|$ value obtained for BP using $\mathcal{E}_{\mathcal{ABS}}$ is better than that of $|r|$ value obtained from these indices. Further, $\mathcal{E}_{\mathcal{ABS}}$ have high pi-electron energy predictive ability compared to second-degree based entropy and $\mathcal{ABS}$ index. With the smaller standard error and higher $F$-value of the proposed models, we can conclude that the performance of the models is better than that of the models discussed in~\cite{nithya2023smallest} using $\mathcal{ABS}$ index.
\section{Conclusions}
In this work, we have determined all connected graphs with $\eta_{n}>-1$. As a result, graphs with two distinct $\mathcal{ABS}$ eigenvalues are classified. Also, bipartite graphs with three distinct $\mathcal{ABS}$ eigenvalues are determined. Further, some bounds on the spectral radius and energy of the matrix $\mathcal{AS}(G)$ are obtained. The problem of characterizing non-bipartite graphs with three distinct $\mathcal{ABS}$ eigenvalues remains open. Also, the chemical importance of $\mathcal{ABS}$ energy is demonstrated. As a future work, the problem of obtaining sharp bounds for the spectral radius and energy of the matrix $\mathcal{AS}(G)$ in terms of graph parameters would be interesting.     

 \end{document}